 \def\draw #1 by #2 (#3){
  \vbox to #2{
    \hrule width #1 height 0pt depth 0pt
    \vfill
    \special{picture #3} 
    }
  }
 \def\scaleddraw #1 by #2 (#3 scaled #4){{
  \dimen0=#1 \dimen1=#2
  \divide\dimen0 by 1000 \multiply\dimen0 by #4
  \divide\dimen1 by 1000 \multiply\dimen1 by #4
  \draw \dimen0 by \dimen1 (#3 scaled #4)}
  }
\newtheorem{theorem}{Theorem}[section]
\newtheorem{example}[theorem]{Example}
\newtheorem{problem}[theorem]{Problem}
\newtheorem{defin}[theorem]{Definition}
\newtheorem{lemma}[theorem]{Lemma}
\newtheorem{corollary}[theorem]{Corollary}
\newtheorem{remark}[theorem]{Remark}
\newtheorem{nt}{Note}
 \newtheorem{rule-def}[theorem]{Rule}
\begin{document}
 \newcommand{\la}{\lambda}
 \newcommand{\si}{\sigma}
 \newcommand{\ol}{1-\lambda}
 \newcommand{\be}{\begin{equation}}
 \newcommand{\ee}{\end{equation}}
 \newcommand{\bea}{\begin{eqnarray}}
 \newcommand{\eea}{\end{eqnarray}}

\author{Gi-Sang Cheon\thanks{Applied Algebra and Optimization Research Center, Department of Mathematics, Sungkyunkwan University, Suwon 16419, Republic of Korea}, Ji-Hwan Jung\footnotemark[3], Bumtle Kang\footnotemark[2],  Hana Kim\footnotemark[2],\\
 Suh-Ryung Kim\thanks{Department of Mathematics Education, Seoul National University, Seoul 08826, Republic of Korea},
 Sergey Kitaev\thanks{Department of Mathematics and Statistics, University of Strathclyde, 26 Richmond Street, Glasgow,
  G1 1XH, United Kingdom} \ and \ Seyed Ahmad Mojallal\thanks{GERAD and HEC Montr\'{e}al, Montr\'{e}al, H3T 1J4, Canada}
  \\
{\footnotesize gscheon@skku.edu},\,{\footnotesize
jihwanjung@snu.ac.kr},\,{\footnotesize lokbt1@skku.edu},\,{\footnotesize hakkai14@skku.edu},\,\\
{\footnotesize srkim@snu.ac.kr},\,{\footnotesize sergey.kitaev@cis.strath.ac.uk},\,{\footnotesize
Ahmad\_mojalal@yahoo.com}}

\title{{Counting independent sets in Riordan graphs}\thanks{This work was supported by the National Research Foundation of Korea (NRF) grant funded by the Korean Government (MSIP) (2016R1A5A1008055), the Ministry of Education of Korea (NRF-2019R1I1A1A01044161), and  by the Korea government (MEST) (NRF-2017R1E1A1A03070489).}
\date{}}

\maketitle

 \begin{abstract}
The notion of a Riordan graph was introduced recently, and it is a far-reaching generalization of the well-known Pascal graphs and Toeplitz graphs. However, apart from a certain subclass of Toeplitz graphs, nothing was known on independent sets in Riordan graphs.

In this paper, we give exact enumeration and lower and upper bounds
for the number of independent sets for various classes of Riordan
graphs. Remarkably,  we offer a variety of methods to solve the
problems that range from the structural decomposition theorem to
methods in combinatorics on words. Some of our results are valid for
any graph.

 \bigskip

 \noindent
 {\bf Keywords:} Riordan graph, Toeplitz graph, independent set, pattern avoiding sequence, Fibonacci number, Pell number, Hamiltonian path \\[3mm]
 {\bf AMS classification:}  05C69
 \end{abstract}

 \section{Introduction}

 An {\em independent set}, also known as a {\em stable set}, of a graph is a set of vertices in the graph, no two of which are
 adjacent.
 A {\em maximum independent set} for a given graph $G$ is an independent set of largest possible size. This size is called the {\it independence number} of $G$, and is denoted by $\alpha(G)$, or simply by $\alpha$ if $G$ is clear from the context.
 The problem of finding such a set is called the {\em maximum independent set problem}, and it is an NP-hard optimization problem (see \cite{Butenko} for a survey paper).
 Independent sets in graphs are studied extensively in the literature for various classes of graphs (see \cite{S} for a short survey paper).

 In this paper, we study the number of independent sets in ``Riordan
graphs" which have been introduced recently in~\cite{CJKM,CJKM2}. Riordan graphs have a number of interesting properties
\cite{CJKM} and applications such as creating computer
networks with certain desirable features and designing algorithms to compute values of
graph invariants \cite{DQ}.

A {\em
Riordan matrix} $L=[\ell_{ij}]_{i,j\ge0}$ generated by two formal
power series $g=\sum_{n=0}^\infty g_nz^n$ and $f=\sum_{n=1}^\infty
f_nz^n$ in ${\mathbb Z}[[z]]$ is denoted as $(g,f)$ and defined as
an infinite lower triangular matrix whose $j$-th column generating
function is $gf^j$, {\it i.e.} $\ell_{ij}=[z^i]gf^j$ where
$[z^k]\sum_{n\ge0}a_nz^n=a_k$. If $g_0\ne0$ and $f_1\ne
 0$ then the Riordan matrix is called {\it proper}. For example, $P=\left({1\over 1-z},{z\over 1-z}\right)$ is proper
and $T=\left(z+z^2+z^3,z\right)$ is not proper.

A simple graph $G$ of order $n$ is said to be a {\em Riordan graph} if the vertices of $G$ can be labelled as $1,2,\ldots,n$ so that its adjacency matrix $A(G)$ whose $i$th row corresponds to the vertex $i$ can be expressed as
    \begin{equation} A(G) \equiv (zg,f)_n + (zg,f)_n^T \pmod{2} \label{riordandef}\end{equation}
    for some generating functions $g$ and $f$ over $\mathbb Z$ where $(zg,f)_n$ is the $n\times n$ leading principle matrix of the Riordan matrix
    $(zg,f)$. The Riordan graph $G$ on $n$ vertices with
the adjacency matrix $A(G)$ given by (\ref{riordandef}) is denoted
as $G=G_n(g,f)$. If we let $A(G)=(a_{i,j})_{1\le i,j\le n}$, then,
for $i\ge j$,
    \begin{align*}
a_{i,j}=a_{j,i}\equiv [z^{i-2}]gf^{j-1} \pmod{2}
\end{align*}
by (\ref{riordandef}). In particular, if $[z^0]g \equiv [z^1]f
\equiv 1 \pmod{2}$, then the graph $G_n(g,f)$ is called {\it
proper}.

\begin{table}
 \begin{center}
 \begin{tabular}{c|c|c|c|c|c|c|c|c|c|c|c|c}
 $n$ & 1 & 2 & 3 & 4 & 5 & 6 & 7 & 8 & 9 & 10 &11 & 12 \\
 \hline
 $i(PG_n)$ & 2 & 3 & 4 & 6 & 7 & 12 & 15 & 23 & 24 & 46 & 60 & 98\\
$i(MG_n)$ & 2 & 3 & 4 & 7 & 9 & 13 & 17 & 26 & 29 & 48 & 55 & 95 \\
$i(CG_n)$ & 2 & 3 & 4 & 7 & 8 & 14 & 21 & 35 & 36 & 60 & 81 & 134 \\
 \end{tabular}
 \caption{The number of independent sets for $1\leq n\leq 12$}\label{tab-data}
 \end{center}
 \end{table}

There are several naturally defined classes/families of Riordan graphs~\cite{CJKM,CJKM2}.
In this paper, we focus on two types of Riordan graphs: the Appell type and the Bell type.
A Riordan graph $G_n(g,z)$ is said to be {\em of the Appell type}. Riordan graphs of the Appell type are also known as {\em Toeplitz graphs}.
Toeplitz graphs have been studied in \cite{DTTVZZ,NP,E,IB,Ghorban}.

The adjacency matrix $A(G_n)=(t_{i,j})_{1\le i,j\le n}$ of a
Toeplitz graph $G_n$ with $n$ vertices is the $(0,1)$-symmetric
matrix such that $ij\in E(G_n)$ with $j < i$ if and only if
$t_{i-j+s,s}=t_{s,i-j+s}=1$ for each $s=1,\ldots,n-i+j$. Thus, one
can see that a graph $G$ is a Toeplitz graph $G_n(g,z)$ with $n$
vertices for $g=z^{t_1-1}+z^{t_2-1}+\cdots+z^{t_k-1}$ if and only if
there exist positive integers $t_1,t_2,\ldots,t_k\le n-1$ such that
$E(G)=\{ij\;|\;|i-j|=t_s,\;s=1,\ldots,k\}$. In this context, we can
represent a Toeplitz graph $G_n(g,z)$ for
$g=z^{t_1-1}+z^{t_2-1}+\cdots+z^{t_k-1}$ as $T_n\langle t_1,
t_2,\ldots, t_k\rangle$.

A Riordan graph $G_n(g,zg)$ is said to be {\em of the Bell type}.
Moreover, a Riordan graph $G_n(g,zg)$ is called the {\em Pascal graph} and denoted by $PG_n$, the {\em Catalan graph}  and denoted by $CG_n$, and the {\em Motzkin graph} and denoted by $MG_n$ if $g=\frac{1}{1-z}$, $g=\frac{1-\sqrt{1-4z}}{2z}$, and $g=\frac{1-z-\sqrt{1-2z-3z^2}}{2z^2}$, respectively.

See Table~\ref{tab-data} for our experimental computations for the numbers of independent sets in Pascal graphs, Motzkin graphs, and Catalan graphs with small numbers of vertices. These data can be used to measure the sharpness of the upper bounds obtained in this paper.

The following theorem regarding the adjacency matrices of Riordan graphs plays a key role throughout the paper.
Given a positive integer $n$, we let  \[N_o:=\{2i-1\;|\;1\le i\le \lceil n/2\rceil\} \quad \mbox{and} \quad N_e:=\{2i\;|\;1\le i\le \lfloor n/2\rfloor\}.\]
For a given graph $G$ with $n$ vertices labeled by $1,\ldots,n$, we denote by $\langle V_o\rangle $ and $\langle V_e\rangle $ the subgraphs of $G$ induced by $N_o$ and $N_e$, respectively.

 \begin{theorem}[Riordan Graph Decomposition, \cite{CJKM}]\label{e:th}  Let $G_n=G_n(g,f)$ be a Riordan graph  with $[z^1]f=1$. Then,
\begin{itemize}
\item[{\rm(i)}] the induced subgraph $\left<V_o\right>$ is a Riordan graph of order
${\lceil n/2\rceil}$ given by $G_{\lceil n/2\rceil}(g^{\prime}(\sqrt{z}) ,f(z))$;
\item[{\rm(ii)}] the induced subgraph $\left<V_e\right>$ is a Riordan graph of order
${\lfloor n/2\rfloor }$ given by $G_{\lfloor n/2\rfloor
}\left(\left(\frac{gf}{z}\right)^{\prime}(\sqrt{z}),f(z)\right)$;
\item[{\rm(iii)}] For the adjacency matrix $A(G_{n})$, suppose that
\begin{eqnarray}\label{e:bm}
A(G_{n})=P^{T}\left[
\begin{array}{cc}
X & B \\
B^{T} & Y
\end{array}
\right]P
\end{eqnarray}
\end{itemize}
where  $P=\left[e_{1}\;|\; e_{3}\;|\; \cdots\;|\; e_{2\lceil
n/2\rceil -1}\;|\; e_{2}\;|\; e_{4} \;|\; \cdots \;|\; e_{2\lfloor
n/2\rfloor }\right] ^{T}$ is the $n\times n$ permutation matrix and
$e_{i}$ is the elementary column vector with the $i$-th entry equal
$1$ and the other entries equal~$0$.
  Then, the matrix $B$ representing the adjacency of a vertex in $V_o$ and a vertex in $V_e$ can be expressed as the sum of two Riordan matrices as follows:
\begin{align*}
B\equiv(z\cdot(gf)^{\prime }(\sqrt{z}),f(z))_{\lceil n/2\rceil
\times\lfloor n/2\rfloor}+((zg)^{\prime}(\sqrt{z}),f(z)) _{\lfloor
n/2\rfloor\times\lceil n/2\rceil }^{T}\pmod 2.
\end{align*}
\end{theorem}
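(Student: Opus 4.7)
The plan is to work entirely from the entrywise formula $a_{i,j} \equiv [z^{i-2}] g f^{j-1} \pmod 2$ for $i\ge j$ (recorded just before the theorem statement). Two observations drive every computation. First, the Frobenius identity $(f(z))^{2k} \equiv f(z^2)^k \pmod 2$, since squaring in $\mathbb{F}_2[[z]]$ kills all cross-terms. Second, every power series splits by parity as $h(z) = h_e(z^2) + z\,h_o(z^2)$ with $h_e(z) = \sum_k h_{2k} z^k$ and $h_o(z) = \sum_k h_{2k+1} z^k$. A direct coefficient computation shows that modulo $2$ one has $h'(\sqrt z) \equiv h_o(z)$ and $(zh)'(\sqrt z) \equiv h_e(z)$, which is what makes the four expressions $g'(\sqrt z)$, $(gf/z)'(\sqrt z)$, $(gf)'(\sqrt z)$, $(zg)'(\sqrt z)$ appearing in (i)--(iii) legitimate integer power series modulo $2$.

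For (i), starting from $a_{2i-1,2j-1}\equiv [z^{2i-3}] g\, f^{2j-2}$ with $i\ge j$, Frobenius rewrites $f^{2j-2}\equiv f(z^2)^{j-1}$, and splitting $g = g_e(z^2) + z\,g_o(z^2)$ shows that only the odd-parity piece $z\,g_o(z^2) f(z^2)^{j-1}$ contributes to the odd coefficient $[z^{2i-3}]$. Extracting and reindexing $z^2\mapsto z$ yields $[z^{i-2}] g_o(z) f(z)^{j-1} \equiv [z^{i-2}] g'(\sqrt z) f^{j-1}$, which is precisely the $(i,j)$-entry of $A(G_{\lceil n/2\rceil}(g'(\sqrt z),f))$. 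Part (ii) is the mirror computation: $a_{2i,2j} \equiv [z^{2i-2}] g f^{2j-1} \equiv [z^{2i-2}] (gf)\, f(z^2)^{j-1}$, and now only the even part of $gf = h_e(z^2) + z\,h_o(z^2)$ survives the even coefficient $[z^{2i-2}]$, giving $[z^{i-1}] h_e(z) f^{j-1} = [z^{i-2}] (h_e/z) f^{j-1} \equiv [z^{i-2}] (gf/z)'(\sqrt z) f^{j-1}$, as required.

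For (iii), the $(i,j)$-entry of $B$ records adjacency between vertex $2i-1$ and vertex $2j$, so I case-split on whether $2i-1>2j$. When $i>j$, the entry is $a_{2i-1,2j} \equiv [z^{2i-3}] g f^{2j-1}$; the same Frobenius-plus-parity procedure applied to $gf$ gives $[z^{i-2}] h_o(z) f^{j-1} \equiv [z^{i-2}] (gf)'(\sqrt z) f^{j-1}$, which is the $(i,j)$-entry of the first Riordan block $(z(gf)'(\sqrt z),f)$. When $i\le j$, the entry is $a_{2j,2i-1}\equiv [z^{2j-2}] g f^{2i-2}$; the same procedure applied to $g$ (not $gf$) yields $[z^{j-1}] g_e(z) f^{i-1} \equiv [z^{j-1}] (zg)'(\sqrt z) f^{i-1}$, which is the $(j,i)$-entry of $((zg)'(\sqrt z),f)$, hence the $(i,j)$-entry of its transpose.

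To close (iii) I verify that the two summands do not overlap. The first block, being Riordan, vanishes at $(i,j)$ whenever $i-2 < j-1$, i.e.\ $i\le j$. The $(i,j)$-entry of the transposed second block, namely $[z^{j-1}](zg)'(\sqrt z)\,f^{i-1}$, vanishes whenever $j-1 < i-1$ (i.e.\ $j<i$) because $f^{i-1}$ has valuation $i-1$. So each block contributes in exactly its own regime, and the sum reproduces $B$ modulo $2$. The only real obstacle is the case split plus triangularity check in (iii); everything else collapses to the single mod-$2$ miracle $f^2 \equiv f(z^2)$ together with routine even/odd coefficient extraction.
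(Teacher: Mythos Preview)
Your argument is correct. The Frobenius identity $f(z)^2\equiv f(z^2)\pmod 2$ together with the even/odd splitting $h(z)=h_e(z^2)+zh_o(z^2)$ and the mod-$2$ identifications $h'(\sqrt z)\equiv h_o(z)$, $(zh)'(\sqrt z)\equiv h_e(z)$ are exactly the right tools, and your case split in~(iii) with the triangularity check is the clean way to see that the two Riordan blocks partition~$B$. One small remark: in part~(ii) the division by $z$ in $h_e/z$ is legitimate precisely because $h_0=(gf)_0=g_0f_0=0$, which you use implicitly; it may be worth saying this explicitly.

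Note, however, that the paper itself does not prove this theorem: it is quoted verbatim from~\cite{CJKM} as a structural tool, with no proof supplied here. So there is no ``paper's own proof'' to compare against. Your write-up is a self-contained verification of the cited result, and the method you use---Frobenius endomorphism on $\mathbb{F}_2[[z]]$ plus parity extraction---is the natural and essentially forced approach, and is in fact how the result is established in~\cite{CJKM}.
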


 Let $G_n=G_{n}(g,f)$ be a proper Riordan graph. If $\left< V_{o}\right> \cong G_{\lceil n/2\rceil}(g,f)$ and $\left<
V_{e}\right>$ is a null graph, then $G_n$ is said to be {\em isomorphically odd decomposable} and is abbreviated as
{\em io-decomposable}.

In the rest of this paper, we let $[n]:=\{1,2,\ldots,n\}$.

This paper is organized as follows. In Section~\ref{chordalToeplitzexact}, we study the number of independent sets in a Toeplitz graph which is a Riordan graphs of the Appell type.
We give a lower bound and an upper bound for the number of independent sets in a Toeplitz graph and provide the exact number of independent sets in a chordal Toeplitz graph in terms of the Fibonacci numbers. In Section~\ref{upperBoundsSec}, we study the number of independent sets in a Riordan graph of the Bell type. We give  upper bounds for various Riordan graphs of the Bell type in terms of the Pell numbers. We also find the independence number and the number of maximal independent sets in an io-decomposable Riodan graph of the Bell type. Then, we give a lower bound for the number of independent sets in an io-decomposable Riordan graph through a decomposition of graphs.
 Finally, in Section~\ref{further-research-sec} we discuss directions of further research.

\section{Independent sets in Toeplitz graphs}\label{chordalToeplitzexact}

 In \cite{Kit}, independent sets on {\em
path-schemes} are considered. As a matter of fact, the class of path-schemes is
precisely the class of Toeplitz graphs.
Explicit enumeration of independent sets in terms of generating
functions is obtained in \cite{Kit} for a subclass of Toeplitz
graphs defined by the notion of a {\em well-based sequence} (see
\cite{V} for enumerative properties of such sequences). The
combinatorics on words approach used in the enumeration relies on
the methods developed in \cite{GO} to count pattern-avoiding words.
In fact, a result in \cite{Kit} gives an upper bound on the number
of independent sets for any Toeplitz graph, which was not mentioned
in \cite{Kit}.

 Suppose that $k\geq 2$ and $\mathcal{A}=\{A_1,\ldots,A_k\}$ is a set of words of the form
$A_i=1\underbrace{0\ldots 0}_{a_i-1}1$, where $a_i\geq 1$, and
$a_i<a_j$ if $i<j$. Moreover, we assume that for any $i>1$ and
$A_i\in\mathcal{A}$, if we replace any number of $0$'s in $A_i$ by $1$'s,
then we obtain a word $A'_i$ that contains the word
$A_j\in\mathcal{A}$ as a factor for some $j<i$. In this case, we
call $\mathcal{A}$ a {\em well-based\index{well-based set} set}, and
we call the sequence of $a_i$'s associated with $\mathcal{A}$ a {\em
well-based sequence}. It is known that any well-based set must contain the word $11$~\cite{Kit}.
\begin{theorem}(\cite{Kit}) Let $t_1,t_2,\ldots,t_k$ be a well-based sequence with $t_1 = 1$ where $\{t_1,t_2,\ldots,t_k\}$ is a subset of $[n]$. Let $c(x) = 1+ \sum_{i=1}^k x^{t_i}$. Then the generating function for the number of independent sets in $T_n\left<t_1,t_2,\ldots,t_k\right>$ with the vertex set $V=[n]$ is given by\[
G(x) = \frac{c(x)}{(1-x)c(x) - x}.\]\label{thm:kit}\end{theorem}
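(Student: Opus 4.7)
My plan is to encode independent sets as binary words and count the admissible words via a direct word-decomposition argument, in the spirit of the Goulden--Jackson / Kitaev method. Identify $I \subseteq [n]$ with its characteristic word $w(I) = w_1 \cdots w_n$, where $w_i = 1$ iff $i \in I$. Then $I$ is independent in $T_n\langle t_1,\ldots,t_k\rangle$ iff no two positions of $w(I)$ carrying a $1$ lie at distance $t_i$ for some $i$. Setting $A_i := 1\, 0^{t_i - 1}\, 1$, the central step is the equivalence
\[
w \text{ has no two 1's at distance in } \{t_1,\ldots,t_k\} \quad \Longleftrightarrow \quad w \text{ avoids every } A_i \text{ as a factor}.
\]

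I would establish this by induction on $i$. The direction $\Rightarrow$ is immediate, since a factor $A_i$ exhibits two 1's at distance $t_i$. For $\Leftarrow$, suppose $w$ has 1's at positions $p < q$ with $q - p = t_i$, and examine the substring $u := w_p w_{p+1}\cdots w_q$ of length $t_i + 1$. If the inner $t_i - 1$ characters of $u$ are all $0$, then $u = A_i$ already; otherwise $u$ is obtained from $A_i$ by replacing a nonempty set of $0$'s by $1$'s, so the well-based hypothesis provides some $A_j$ with $j < i$ as a factor of $u$, hence of $w$, reducing to a smaller index. The base case $i = 1$ holds automatically since $t_1 = 1$ forces $A_1 = 11$. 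I expect this to be the main obstacle: the well-based property is precisely what lets one pass between the pointwise (independent-set) formulation and the factor-avoidance (word) formulation, and it is easy to forget that the two are a priori different.

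Given the equivalence, any word avoiding all $A_i$ factors uniquely as
\[
0^{\ell_0}\, 1\, 0^{g_1 - 1}\, 1\, 0^{g_2 - 1}\, 1 \cdots 0^{g_{r-1} - 1}\, 1\, 0^{\ell_r},
\]
with $r \geq 0$ ones, gaps $g_j \in S := \{g \geq 1 : g \notin \{t_1,\ldots,t_k\}\}$, and $\ell_0, \ell_r \geq 0$ (the case $r = 0$ accounts for the all-zero words). Tracking length by $x$ and summing the geometric series in $r$, the length generating function equals
\[
G(x) = \frac{1}{1-x} + \frac{x}{(1-x)^{2}\bigl(1 - S(x)\bigr)},
\]
where $S(x) = \sum_{g \in S} x^g = \tfrac{x}{1-x} - (c(x) - 1)$. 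A one-line manipulation then gives the key identity $(1-x)\bigl(1 - S(x)\bigr) = (1-x)c(x) - x$; substituting it back and combining the two terms over a common denominator collapses $G(x)$ to $c(x)/\bigl((1-x)c(x) - x\bigr)$, as required. This last step is a routine calculation I would not spell out in full.
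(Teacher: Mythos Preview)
Your argument is correct. The paper itself does not prove this statement: it is quoted verbatim as a result from \cite{Kit}, so there is no in-paper proof to compare against. What the paper does say is that the original argument in \cite{Kit} proceeds via combinatorics on words and the pattern-avoidance machinery of Guibas--Odlyzko, which is exactly the route you take.

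Two small remarks on presentation. First, the induction you set up for the $\Leftarrow$ direction is not actually needed: once the well-based hypothesis hands you $A_j$ as a literal factor of $u$ (and hence of $w$), you already contradict the assumption that $w$ avoids every $A_i$. Your descent phrasing is not wrong, just superfluous. Second, it is worth stating explicitly the intermediate reformulation you are implicitly using: avoiding all the $A_i$ as factors is a condition on \emph{consecutive} $1$'s only, while the independent-set condition constrains \emph{all} pairs of $1$'s; the well-based hypothesis is precisely what collapses the latter to the former. You say this, but somewhat obliquely. Making that sentence explicit would clarify why well-basedness is the exact hypothesis needed and would align your write-up with how the paper motivates Theorem~\ref{Toeplitz-thm} immediately afterwards.
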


The above theorem can be restated to give a lower bound for the number of independent sets in any Toeplitz graph.

\begin{theorem}\label{Toeplitz-thm} Let $G_n=T_n\langle t_1, t_2,\ldots, t_k\rangle$ be a Toeplitz graph with $n$ vertices, and $A_n=\{t_1,t_2,\ldots, t_k\}$. Further, let $B_n=\{b_1,b_2,\ldots,b_t\}$ be a minimal, possibly
empty subset of $[n]$ such that $A_n\cap B_n=\emptyset$ and the
elements of $A_n\cup B_n$ form a well-based sequence.
  Finally, let $c(x)=1+\sum_{i=1}^{k}x^{t_i}+\sum_{j=1}^{t}x^{b_j}$. Then,
$$i(G_n)\geq [x^n]\frac{c(x)}{(1-x)c(x)-x},$$ where the equality
holds if and only if $B_n=\emptyset$, that is, the elements of $A_n$ form a well-based sequence.  \end{theorem}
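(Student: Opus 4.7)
The plan is to deduce the bound by applying Theorem~\ref{thm:kit} to a ``completed'' Toeplitz graph obtained by adding the edges indexed by $B_n$, and then invoking the trivial fact that deleting edges can only increase the number of independent sets. Concretely, set $H_n := T_n\langle A_n\cup B_n\rangle$. By the defining property of $B_n$, the set $A_n\cup B_n$ is a well-based sequence inside $[n]$, so Theorem~\ref{thm:kit} applies directly to $H_n$ and yields
\[
i(H_n)=[x^n]\frac{c(x)}{(1-x)c(x)-x}.
\]
Since $A_n\subseteq A_n\cup B_n$, we have $E(G_n)\subseteq E(H_n)$, so every independent set of $H_n$ remains an independent set of $G_n$, giving $i(G_n)\geq i(H_n)$, which is exactly the claimed inequality.

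For the equality statement, the direction $B_n=\emptyset \Rightarrow G_n=H_n \Rightarrow i(G_n)=i(H_n)$ is immediate. Conversely, assume $B_n\neq\emptyset$ and pick any $b\in B_n$. Since differences in a Toeplitz graph on $n$ vertices lie in $[n-1]$, any element of $B_n$ equal to $n$ would contribute no edge to either graph and, being the maximum of $A_n\cup B_n$, could be removed from $B_n$ without destroying the well-based property -- contradicting minimality of $B_n$. Hence we may take $b\leq n-1$. The pair $\{1,1+b\}$ is then a two-element set that is independent in $G_n$ but forms an edge of $H_n$, so $i(G_n)>i(H_n)$, completing the equivalence.

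The only substantive step is verifying that the minimality of $B_n$ forces every $b\in B_n$ to satisfy $b\leq n-1$, so that a concrete independent pair of $G_n$ witnessing the strict inequality can always be produced; this is where I expect the one small obstacle to lie, and it is handled by the short argument above. The rest of the proof is essentially a citation of Theorem~\ref{thm:kit} combined with the monotonicity of $i(\cdot)$ under edge addition.
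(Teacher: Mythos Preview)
Your proof is correct and follows essentially the same approach as the paper: both pass to the supergraph $H_n=T_n\langle A_n\cup B_n\rangle$, apply Theorem~\ref{thm:kit} to it, and then use monotonicity of $i(\cdot)$ under edge deletion to obtain the bound. Your treatment of the equality case is in fact more careful than the paper's, which simply asserts that equality in $i(G_n)\geq i(G'_n)$ forces $B_n=\emptyset$ without explicitly invoking minimality to rule out $n\in B_n$.
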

\begin{proof} Let $s_1,\ldots,s_{t+k}$ be an increasing sequence formed by the element in $A_n\cup B_n$ and $G'_n:=T_n\langle s_1,\ldots,s_{t+k}\rangle$. Since $G_n$ is a spanning subgraph of $G'_n$, we have $i(G_n)\geq i(G'_n)$ where the equality holds if and only if $B_n=\emptyset$. Since the elements of $A_n\cup B_n$ form a well-based sequence, Theorem~\ref{thm:kit} can be applied to the Toeplitz graph $G'_n$ to conclude that $$\sum_{i\geq 0}i(G'_n)x^i=\frac{c(x)}{(1-x)c(x)-x}$$ and the desired result follows.\end{proof}

Let $F_k(n)$ be the {\em $n$-th $k$-generalized Fibonacci number} defined
by
\begin{align*}
F_k(1)=\cdots=F_k(k)=1\; \textrm{and}\; F_k(n) = F_k(n-1)+F_k(n-k)\;
\textrm{for}\; n>k.
\end{align*}
In particular, when $k=2$, $F_k(n)$ is the {\em $n$-th Fibonacci number} $F(n)$.

\begin{lemma}[\cite{Kit}]\label{Toeplitz-chordal-3}
$F_k(n+k)$ counts the number of independent sets in $T_n\langle 1,
2,\ldots, k-1\rangle$.
\end{lemma}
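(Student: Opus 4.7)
The plan is to derive a simple recurrence for $i(G_n)$ where $G_n := T_n\langle 1, 2, \ldots, k-1\rangle$, and show it matches the defining recurrence of $F_k(n+k)$ with matching initial values. Observe first that in $G_n$, distinct vertices $i, j \in [n]$ are adjacent precisely when $|i-j| \in \{1, \ldots, k-1\}$, so an independent set is exactly a subset $S \subseteq [n]$ whose elements are pairwise at distance at least $k$.

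Setting $a_n := i(G_n)$, I would split the count according to whether the vertex $n$ belongs to $S$. If $n \notin S$, then $S$ is an arbitrary independent set of $G_{n-1}$, contributing $a_{n-1}$ sets. If $n \in S$, then none of $n-1, n-2, \ldots, n-k+1$ can belong to $S$ (each lies within distance $k-1$ of $n$), so $S \setminus \{n\}$ is an independent set of the induced subgraph on $[n-k]$, which is a copy of $G_{n-k}$; this contributes $a_{n-k}$ sets. Hence $a_n = a_{n-1} + a_{n-k}$ for $n \geq k$, which is exactly the recurrence $F_k(n+k) = F_k(n+k-1) + F_k(n)$ obtained from the definition $F_k(m) = F_k(m-1) + F_k(m-k)$ at $m = n+k$.

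For the base cases $0 \leq n \leq k-1$, every pair of distinct vertices of $[n]$ lies at distance at most $n-1 \leq k-2$, so $G_n$ is the complete graph $K_n$, whose independent sets are $\emptyset$ together with the $n$ singletons; thus $a_n = n+1$. A short induction from $F_k(1) = \cdots = F_k(k) = 1$ yields $F_k(k+j) = j+1$ for $0 \leq j \leq k-1$, agreeing with $a_j$ and closing the induction on $n$.

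No significant obstacle arises; the only care needed is aligning indices so that the generalized Fibonacci recurrence at $m = n+k$ matches the graph recurrence at $n$, and confirming that the $k$ initial values of $F_k$ reproduce the clique cases. One may equivalently phrase the argument as a bijection between independent sets of $G_n$ and binary strings of length $n$ containing no two $1$'s within distance $k-1$, counted by the same recurrence.
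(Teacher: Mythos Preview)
Your argument is correct: the case split on whether $n\in S$ gives the recurrence $a_n=a_{n-1}+a_{n-k}$ for $n\ge k$, and your verification of the initial values $a_j=j+1=F_k(k+j)$ for $0\le j\le k-1$ is accurate, so the induction closes.

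Note, however, that the paper does not supply its own proof of this lemma; it is quoted from \cite{Kit}. In that reference the result is obtained via the generating-function machinery described in Theorem~\ref{thm:kit}: the sequence $1,2,\ldots,k-1$ is well-based, so with $c(x)=1+x+\cdots+x^{k-1}=(1-x^k)/(1-x)$ one gets
\[
\sum_{n\ge 0}i(T_n\langle 1,\ldots,k-1\rangle)x^n=\frac{c(x)}{(1-x)c(x)-x}=\frac{1-x^k}{(1-x)(1-x-x^k)},
\]
from which the $k$-generalized Fibonacci numbers can be read off. Your direct recurrence is more elementary and self-contained (no appeal to the pattern-avoidance framework or to well-basedness), while the generating-function route has the advantage of fitting into the general scheme of Theorem~\ref{thm:kit} that the paper uses elsewhere for arbitrary well-based sequences.
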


 Now, we characterize the Riordan graphs $G_n$ satisfying
$i(G_n)\le F_k(n)$.

 \begin{theorem}
 Let $k\ge3$ and $G_n^{(k)}=G_n(g,f)$ be a Riordan graph such that $[z^i]g\equiv
 1$ for each $i=0,\ldots,k-2$, $[z^1]f\equiv 1$ and $[z^j]f\equiv 0\pmod{2}$ for each $j=2,\ldots,k-1$.
 Then,
 \begin{equation*}
 i(G_n^{(k)}) \le F_k(n+k)\label{equ11}\end{equation*}
 and the equality holds if and only if $G_n^{(k)}=T_n\left<1,2,\ldots,k-1\right>$.
\end{theorem}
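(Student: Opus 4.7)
The plan is to show that, under the hypotheses, the Toeplitz graph $T_n\langle 1,2,\ldots,k-1\rangle$ is always a spanning subgraph of $G_n^{(k)}$, and then invoke the (elementary) fact that adding edges to a graph can only decrease the number of independent sets. Together with Lemma~\ref{Toeplitz-chordal-3}, this immediately yields both the bound and the equality characterization.

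The first and main step is to establish that the edge $ij$ is present in $G_n^{(k)}$ whenever $1 \le |i-j| \le k-1$. Recall that for $i\ge j$ one has $a_{i,j}\equiv [z^{i-2}]gf^{j-1}\pmod 2$. Working entirely modulo $2$, write $f(z)=z+h(z)$ where, by hypothesis, $h(z)$ has order at least $k$. Then
\[
f^{j-1}=(z+h)^{j-1}=\sum_{m=0}^{j-1}\binom{j-1}{m}z^{j-1-m}h^m,
\]
and the term indexed by $m$ has order at least $j-1+m(k-1)$. Consequently
\[
[z^{j-1+\ell}]f^{j-1}\equiv\begin{cases}1&\text{if }\ell=0,\\ 0&\text{if }1\le\ell\le k-2,\end{cases}\pmod 2.
\]
For $s=i-j$ with $1\le s\le k-1$, expanding the product $gf^{j-1}$ by convolution,
\[
a_{i,j}=[z^{j+s-2}]gf^{j-1}\equiv\sum_{r+\ell=s-1}[z^r]g\cdot[z^{j-1+\ell}]f^{j-1}\pmod 2,
\]
only the term $\ell=0$, $r=s-1$ contributes, giving $a_{i,j}\equiv[z^{s-1}]g\equiv 1\pmod 2$ since $0\le s-1\le k-2$. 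This shows all the required ``short'' edges are present.

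Once this is in hand, the rest is short. The graph $H_n:=T_n\langle 1,2,\ldots,k-1\rangle$ is thus a spanning subgraph of $G_n^{(k)}$, so every independent set of $G_n^{(k)}$ is an independent set of $H_n$, whence
\[
i(G_n^{(k)})\le i(H_n)=F_k(n+k)
\]
by Lemma~\ref{Toeplitz-chordal-3}. For the equality characterization, suppose $G_n^{(k)}$ has an edge $uv$ that is not in $H_n$; then $|u-v|\ge k$, so $\{u,v\}$ is independent in $H_n$ but not in $G_n^{(k)}$, giving strict inequality $i(G_n^{(k)})<i(H_n)$. Hence equality forces $E(G_n^{(k)})=E(H_n)$, i.e.\ $G_n^{(k)}=H_n$, and the converse is immediate from Lemma~\ref{Toeplitz-chordal-3}.

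The only nontrivial point is the coefficient computation in the first step; the main thing to get right is the order estimate for $h^m$ and the observation that all intermediate coefficients of $f^{j-1}$ vanish on the relevant range, so that only one convolution term survives. The remainder is a routine application of monotonicity of $i(\cdot)$ under edge addition together with the cited lemma.
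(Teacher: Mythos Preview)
Your proof is correct and follows essentially the same approach as the paper: both show that $T_n\langle 1,\ldots,k-1\rangle$ is a spanning subgraph of $G_n^{(k)}$ via the coefficient computation $[z^{u-2}]gf^{v-1}\equiv 1\pmod 2$ for $1\le u-v\le k-1$, and then appeal to Lemma~\ref{Toeplitz-chordal-3} together with the monotonicity of $i(\cdot)$ under edge addition. Your binomial expansion of $f^{j-1}=(z+h)^{j-1}$ and the order estimate $j-1+m(k-1)$ make the vanishing of intermediate coefficients slightly more explicit than the paper's narrative argument, and your equality case (exhibiting the two-element set $\{u,v\}$ when an extra edge is present) spells out what the paper simply declares ``easy to see,'' but the substance of the argument is the same.
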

\begin{proof}
  It is easy to see that if a graph $H$ is a spanning subgraph of $G$, then $i(G) \le i(H)$ and the equality holds if and only if $G=H$.
For each $k \ge 3$, let $g$ and $f$ be generating functions over $\mathbb{Z}$ such that
\[ [z^i]g\equiv 1\pmod{2} \quad \mbox{for each} \ i=0,\ldots,k-2,\]
\[ [z^1]f\equiv 1 \pmod{2} \ \mbox{and} \ [z^j]f\equiv 0\pmod{2} \quad \mbox{for each} \ j=2,\ldots,k-1.\]
Then, we consider a Riordan graph
$$G_n^{(k)}=G_n(g,f).$$
By definition,
$G_n(\sum_{i=0}^{k-2}z^i,z)=T_n\left<1,2,\ldots,k-1\right>$. By
Lemma~\ref{Toeplitz-chordal-3}, \[i(T_n\left<1,2,\ldots,k-1\right>)
= F_k(n+k).\] As a matter of fact, $T_n\left<1,2,\ldots,k-1\right>$
is a spanning subgraph of $G_n^{(k)}$. Indeed, take two adjacent
vertices $u$ and $v$ in $T_n\left<1,2,\ldots,k-1\right>$. Without
loss of generality, we assume that $u > v$. Then, $1 \le u-v \le
k-1$. To see the adjacency of $u$ and $v$ in $G_n^{(k)}$, we compute
$[z^{u-2}]gf^{v-1}$. By the definitions, \begin{equation}
g=1+z+\cdots+z^{k-2}+O(z^{k-1})\quad \mbox{and} \quad
f=z+O(z^{k}).\label{eq:gandf}\end{equation} By~\eqref{eq:gandf}, the
minimum degree of the nonzero term in $f^{v-1}$ is $v-1$ and the
second minimum degree of the nonzero term in $f^{v-1}$ is at least
$v+k-2$, if it exists.
However, the term $z^{u-v-k}$ does not exist in $g$ since $u-v \le
k-1$. Since $0 \le u-v-1 \le k-2$, we obtain $[z^{u-v-1}]g=1$. Thus,
$[z^{u-2}]gf^{v-1}=1$, which is obtained by multiplying $z^{v-1}$ in
$f^{v-1}$ by $z^{u-v-1}$ in $g$. Therefore, $u$ and $v$ are adjacent
in $G_n^{(k)}$ which completes the proof.
\end{proof}

Even though in our paper we normally obtain lower and/or upper bounds for the number of independent sets in question, in this section, we find the {\em exact} number of independent sets in
Toeplitz graphs related to chordal graphs. A {\em chordal graph} is a simple graph in which every graph cycle of length four and greater has a cycle chord.

\begin{lemma}[\cite{CJKKM}]\label{Toeplitz-chordal-1} For $k\ge1$, let $G_n:=T_n\langle t_1, t_2,\ldots, t_k\rangle$ be a Toeplitz graph of order $n\ge t_k+t_{k-1}+1$. Then, $G_n$ is
chordal if and only if $t_j=jt_1$ for each
$j=1,\ldots,k$.
\end{lemma}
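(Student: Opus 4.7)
The plan is to prove the biconditional by handling each direction separately: the ``if'' direction via a direct structural decomposition of $G_n$, and the ``only if'' direction contrapositively, by exhibiting an induced cycle of length at least four whenever the arithmetic progression condition $t_j = jt_1$ fails.

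For the ``if'' direction, assume $t_j = jt_1$ for every $j$. Every edge-length is then a multiple of $t_1$, so edges of $G_n$ only connect vertices in the same residue class modulo $t_1$. Hence $G_n$ is the disjoint union of $t_1$ subgraphs indexed by residue classes. Within each class, reindexing the vertices in natural order as $1, 2, \ldots, m$ turns the induced subgraph into the $k$th power $P_m^k$ of a path, since two reindexed vertices at positions $s<r$ are adjacent if and only if $(r-s)t_1 = t_{r-s}$, equivalently $r-s\le k$. This graph is chordal because the left-to-right vertex ordering is a perfect elimination ordering: the later neighbors of any vertex are at most $k$ consecutive vertices, which pairwise differ by at most $k-1$ and hence form a clique. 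A disjoint union of chordal graphs is chordal.

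For the converse, let $j$ be the smallest index with $t_j \neq jt_1$, so $t_i = it_1$ for all $i < j$ and necessarily $j \ge 2$. The plan is to produce an induced cycle of length at least four in $G_n$. The natural first candidate is the 4-cycle on the vertices $1,\ 1+t_1,\ 1+t_1+t_j,\ 1+t_j$, which all lie in $[n]$ by the assumption $n \ge t_k + t_{k-1} + 1$. Its side lengths are $t_1, t_j, t_1, t_j$, and its two potential chord lengths are $t_j - t_1$ and $t_j + t_1$. Using $t_i = it_1$ for $i < j$ together with $t_j \neq jt_1$, a short calculation (splitting on whether $t_j>jt_1$ or $(j-1)t_1<t_j<jt_1$) shows that $t_j - t_1$ lies strictly between $t_{j-2}$ and $t_j$ (with convention $t_0 = 0$) while being different from $t_{j-1}$. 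Hence $t_j - t_1 \notin \{t_1,\ldots,t_k\}$ and the chord $\{1+t_1,\ 1+t_j\}$ is automatically absent. If $t_j + t_1 \notin \{t_1, \ldots, t_k\}$ as well, this 4-cycle is induced and we are done.

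The main obstacle is the subcase $t_j + t_1 = t_m$ for some $m > j$: here the diagonal $\{1,\ 1+t_1+t_j\}$ is a chord, and a longer induced cycle must be constructed. A prototypical construction that succeeds for $j = 2$ is the $(P+Q)$-cycle obtained by taking $P$ forward steps of length $t_1$ followed by $Q$ backward steps of length $t_j$, where $t_j/t_1 = P/Q$ in lowest terms; since the only edge-length smaller than $t_j$ is $t_1$ itself, every non-consecutive distance along this cycle is a multiple of $t_1$ greater than $t_1$ (and thus not in $\{t_1,t_j\}$) or a value not in $\{t_1,t_j,\ldots\}$ at all. For $j \ge 3$, the intermediate edge-lengths $t_1, 2t_1, \ldots, (j-1)t_1$ create additional potential chords, and the construction must be refined by interleaving steps of several $t_i$'s so that all partial sums simultaneously avoid the arithmetic progression $\{t_1, \ldots, t_{j-1}\}$ and any larger $t_\ell$ that happens to be realizable as such a sum. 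Verifying chord-freeness of the resulting cycle, and ensuring it fits inside $[n]$ using the hypothesis $n \ge t_k + t_{k-1} + 1$, is the technical heart of this direction.
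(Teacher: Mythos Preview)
The paper does not contain a proof of this lemma; it is quoted from the companion paper \cite{CJKKM} and used as a black box. So there is no ``paper's own proof'' to compare against, and your proposal must be judged on its own.

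Your ``if'' direction is fine: the residue-class decomposition is correct, each component is indeed isomorphic to a path power $P_m^k$, and your perfect elimination ordering argument establishes chordality.

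Your ``only if'' direction, however, is genuinely incomplete, and you acknowledge as much. Two concrete problems:
\begin{itemize}
\item For $j\ge 3$ in the subcase $t_j+t_1=t_m$, you only state that ``the construction must be refined'' and call this ``the technical heart''; no actual cycle is produced and no chord-freeness is verified. That is the whole difficulty of the lemma, not a detail to be waved at.
\item Even your $j=2$ construction is flawed as stated. Writing $t_2/t_1=P/Q$ in lowest terms and walking $P$ steps of $+t_1$ then $Q$ steps of $-t_2$ reaches the vertex $1+Pt_1=1+\tfrac{t_1t_2}{\gcd(t_1,t_2)}$. The hypothesis only gives $n\ge t_k+t_{k-1}+1$, which can be far smaller: e.g.\ with $k=2$, $t_1=3$, $t_2=5$ one has $n\ge 9$ but your cycle needs vertex $16$. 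So the cycle need not fit inside $[n]$, and the argument breaks down precisely where the bound $n\ge t_k+t_{k-1}+1$ is supposed to be used.
\end{itemize}
In short, the forward implication is solid, but the converse remains a sketch with a real gap in the base case and no argument at all in the general case.
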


\begin{lemma}[\cite{CJKKM}]\label{Toeplitz-chordal-2} For $k,t\ge1$, let $G_n=T_n\langle t, 2t,\ldots,
kt\rangle$ be a Toeplitz graph of order $n\ge (2k-1)t+1$. Then,
every connected component $H_i$, $i=1,\ldots,t$, of $G_n$ is
isomorphic to the graph $T_{\lfloor(n-i)/t\rfloor+1}\langle 1,
2,\ldots, k\rangle$, and the vertex set of $H_i$ is
\begin{align*}
V(H_i)=\{v\in [n]\;|\; v=i+st,\;s=0,1,\ldots\}.
\end{align*}
\end{lemma}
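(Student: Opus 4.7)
The plan is to recognize that in $G_n=T_n\langle t,2t,\ldots,kt\rangle$ an edge joins $u$ to $v$ only when $|u-v|$ is a multiple of $t$, so the residue classes modulo $t$ partition $V(G_n)$ into subsets each of which is a union of connected components. Concretely, I would define, for $i=1,\ldots,t$,
\[
V_i=\{v\in[n]\mid v\equiv i\pmod{t}\}=\{i,\,i+t,\,i+2t,\ldots\}\cap[n],
\]
and observe immediately that $|V_i|=\lfloor(n-i)/t\rfloor+1$, which matches the claimed cardinality. The proof then reduces to showing that the subgraph $H_i$ induced on $V_i$ (a) is connected and (b) is isomorphic to $T_{|V_i|}\langle 1,2,\ldots,k\rangle$.

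For the isomorphism, I would introduce the bijection $\phi_i\colon V_i\to\{1,2,\ldots,|V_i|\}$ defined by $\phi_i(i+st)=s+1$. Given two vertices $i+st,i+s't\in V_i$, they are adjacent in $G_n$ iff $|s-s'|\,t\in\{t,2t,\ldots,kt\}$, iff $|(s+1)-(s'+1)|\in\{1,2,\ldots,k\}$, iff $\phi_i(i+st)$ and $\phi_i(i+s't)$ are adjacent in $T_{|V_i|}\langle 1,2,\ldots,k\rangle$. Hence $\phi_i$ is a graph isomorphism. The hypothesis $n\ge(2k-1)t+1$ enters here to guarantee $|V_i|\ge k+1$ for every $i\in\{1,\ldots,t\}$ (since in the worst case $i=t$ one gets $|V_t|\ge\lfloor((2k-2)t+1)/t\rfloor+1=2k-1\ge k+1$ for $k\ge 2$), so that all of the offsets $1,2,\ldots,k$ are actually realised in the target graph.

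Connectedness of $H_i$ then follows at once: in $T_{|V_i|}\langle 1,2,\ldots,k\rangle$ consecutive vertices $j$ and $j+1$ are joined by the offset $1$, so the graph contains a Hamiltonian path and in particular is connected. Transporting this back through $\phi_i^{-1}$ shows $V_i$ induces a single connected component of $G_n$. Combined with Step 1 (which forbids edges between different residue classes), this yields exactly $t$ components, with the $i$-th one being $H_i\cong T_{|V_i|}\langle 1,2,\ldots,k\rangle$.

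The only substantive point is making sure the residue-class decomposition is both \emph{necessary} (no edges cross classes, from the divisibility constraint on $|u-v|$) and \emph{sufficient} (each class is a single component, from the presence of consecutive-vertex edges under $\phi_i$); neither step is a real obstacle. The lower bound $n\ge(2k-1)t+1$ is only used to make the statement clean by ensuring every $H_i$ is genuinely a graph with all $k$ prescribed offsets active rather than a degenerate small case.
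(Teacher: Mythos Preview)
The paper does not supply its own proof of this lemma; it is quoted from \cite{CJKKM} without argument. There is therefore nothing to compare against, and your proposal stands on its own. It is correct and is the natural argument: the residue-class partition modulo $t$ gives the components because every allowed offset is a multiple of $t$, and the relabelling $\phi_i(i+st)=s+1$ carries each class to $T_{|V_i|}\langle 1,\ldots,k\rangle$. One small remark: your use of the hypothesis $n\ge(2k-1)t+1$ is stronger than what is needed for the decomposition itself (for the offsets $1,\ldots,k$ to be admissible in $T_{|V_i|}\langle 1,\ldots,k\rangle$ one only needs $|V_i|\ge k+1$, which already follows from $n\ge(k+1)t$), and your parenthetical verification covers only $k\ge2$; but this is harmless, since the $k=1$ case is trivial and the stated hypothesis is inherited from the chordal context of Lemma~\ref{Toeplitz-chordal-1} rather than being sharp for this lemma.
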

For each $i$, we can easily check that any $k+1$ consecutive vertices in $T_{\lfloor(n-i)/t\rfloor+1}\langle 1, 2,\ldots, k\rangle$ is a clique, so as $H_i$.

The following theorem follows immediately from Lemmas
\ref{Toeplitz-chordal-1}, \ref{Toeplitz-chordal-2} and~\ref{Toeplitz-chordal-3}.

\begin{theorem}For $k\ge1$, let $G_n=T_n\langle t, 2t,\ldots,
kt\rangle$ be a Toeplitz graph of order $n\ge (2k-1)t+1$. Then,
\begin{align*}
i(G_n)=\prod_{j=1}^kF_{k+1}\left(\left\lfloor\frac{n-j}{t}\right\rfloor+k+2\right).
\end{align*}
\end{theorem}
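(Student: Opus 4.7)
The plan is to assemble the formula directly from the three preceding lemmas; no new combinatorial insight is needed beyond observing that the number of independent sets is multiplicative over connected components. First, I would invoke Lemma~\ref{Toeplitz-chordal-2} with the hypothesis $n \ge (2k-1)t+1$ to decompose $G_n = T_n\langle t, 2t, \ldots, kt\rangle$ into its $t$ connected components $H_1, \ldots, H_t$, where
\begin{equation*}
H_i \cong T_{m_i}\langle 1, 2, \ldots, k\rangle \quad \text{with} \quad m_i = \left\lfloor \frac{n-i}{t} \right\rfloor + 1.
\end{equation*}
Lemma~\ref{Toeplitz-chordal-1} is what explains, structurally, why such a clean decomposition is available here: the hypothesis $t_j = jt$ forces $G_n$ into the chordal regime, and the components are paths-with-extra-edges of the canonical form $T_m\langle 1,\ldots,k\rangle$.

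Next, because any independent set of a graph is in bijection with tuples consisting of one independent set from each connected component, the count splits as
\begin{equation*}
i(G_n) = \prod_{i=1}^{t} i(H_i).
\end{equation*}
To evaluate each factor, I would apply Lemma~\ref{Toeplitz-chordal-3} with its parameter $k$ replaced by $k+1$, which gives $i(T_m\langle 1,2,\ldots,k\rangle) = F_{k+1}(m+k+1)$. Substituting $m = m_i = \lfloor (n-i)/t \rfloor + 1$ produces
\begin{equation*}
i(H_i) = F_{k+1}\!\left(\left\lfloor \frac{n-i}{t} \right\rfloor + k + 2\right),
\end{equation*}
and multiplying over the components yields the claimed product.

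There is no substantive obstacle; the proof is essentially bookkeeping. The two points to verify are that the hypothesis $n \ge (2k-1)t+1$ is exactly the bound Lemma~\ref{Toeplitz-chordal-2} demands (so each $H_i$ really has the asserted isomorphism type), and that the index shift $k \mapsto k+1$ when re-invoking Lemma~\ref{Toeplitz-chordal-3} is handled consistently, so that $F_{k+1}$ rather than $F_k$ appears and the argument $m_i + k + 1 = \lfloor (n-i)/t \rfloor + k + 2$ comes out correctly. Once these are checked, the formula follows in one line.
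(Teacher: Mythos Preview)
Your approach is correct and is exactly the paper's: the paper simply states that the theorem ``follows immediately from Lemmas~\ref{Toeplitz-chordal-1}, \ref{Toeplitz-chordal-2} and~\ref{Toeplitz-chordal-3},'' which is precisely the decomposition-into-components argument you spell out. One small remark: your product runs over the $t$ components, i.e.\ $\prod_{i=1}^{t}$, whereas the displayed formula in the statement has upper index $k$; your derivation makes clear that $t$ is what is meant, so this is a typo in the stated formula rather than a gap in your argument.
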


An independent set of a graph $G$ is a clique in its complement graph $\overline{G}$. Therefore, the number of independent sets in $G$ is the number of cliques in $\overline{G}$.
In addition, it is well-known that the complement of a Toeplitz graph is also a Toeplitz graph.
In the following theorem, we count the number of cliques in a chordal Toeplitz graph to give the number of independent sets in a Toeplitz graph.

\begin{theorem}\label{thm:toecliques} For $k\ge1$, let $G_n=T_n\langle t, 2t,\ldots,
kt\rangle$ be a Toeplitz graph of order $n\ge (2k-1)t+1$. Then, the number of cliques in $G_n$ is\[\left(n-(k-1)t\right)2^k.\]
\end{theorem}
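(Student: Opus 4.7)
The plan is to decompose $G_n$ using Lemma~\ref{Toeplitz-chordal-2} into its $t$ connected components $H_1,\ldots,H_t$, where $H_i\cong T_{m_i}\langle 1,2,\ldots,k\rangle$ with $m_i=\lfloor(n-i)/t\rfloor+1$, and then reduce to counting cliques inside each component. Because $G_n$ has no edges between distinct components, every clique of $G_n$ is contained in a single $H_i$, so the total count becomes a sum of per-component counts.

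To count cliques in $T_m\langle 1,2,\ldots,k\rangle$, I would use the elementary fact that two vertices $u<v$ are adjacent exactly when $v-u\in\{1,\ldots,k\}$. Hence a nonempty $S\subseteq[m]$ is a clique if and only if $\max S-\min S\le k$, i.e., $S$ lies inside a window of $k+1$ consecutive integers. Parametrizing by $v=\min S$: for $v\in\{1,\ldots,m-k\}$ one obtains $2^k$ cliques (namely $\{v\}$ together with any subset of $\{v+1,\ldots,v+k\}$), while for $v\in\{m-k+1,\ldots,m\}$ one obtains $2^{m-v}$ cliques. Adding the empty clique, the total is
\[
1+(m-k)\cdot 2^k+\sum_{j=0}^{k-1}2^j=(m-k+1)\cdot 2^k.
\]

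Summing the per-component contributions yields
\[
\sum_{i=1}^{t}(m_i-k+1)\cdot 2^k=2^k\!\left(\sum_{i=1}^{t}m_i-(k-1)t\right)=(n-(k-1)t)\cdot 2^k,
\]
where $\sum_{i=1}^{t}m_i=|V(G_n)|=n$ since the $H_i$ partition the vertex set. The hypothesis $n\ge(2k-1)t+1$ forces $m_i\ge 2k-1\ge k$ for every $i$, which keeps the interior/tail case split in the per-component enumeration valid.

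The only mildly delicate point is the bookkeeping for the empty clique when aggregating per-component totals and the boundary check that $m_i\ge k$ under the stated hypothesis; beyond that, the argument is a straightforward combinatorial enumeration, and the identity $\sum m_i=n$ delivers the stated closed form immediately.
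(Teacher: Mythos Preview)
Your proof is correct and follows essentially the same route as the paper: decompose $G_n$ into its $t$ components via Lemma~\ref{Toeplitz-chordal-2}, count cliques in each $T_{m_i}\langle 1,\ldots,k\rangle$ to get $(m_i-k+1)2^k$, and sum using $\sum_i m_i=n$. The only cosmetic difference is that you parametrize cliques by their minimum element while the paper partitions by maximum element (its sets $W_j$), but both enumerations yield the identical per-component count.
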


\begin{proof} By Lemmas~\ref{Toeplitz-chordal-2}, $G_n$ has $t$ components $H_1,\ldots,H_t$ and each component is isomorphic to $H_i:=T_{\lfloor(n-i)/t\rfloor+1}\langle 1, 2,\ldots, k\rangle$ for $i=1,\ldots,t$.
Let $\mathcal{K}_i$ be the collection of cliques in $H_i$.
Since $n\ge (2k-1)t+1$, we obtain  $|V(H_i)|=\lfloor(n-i)/t\rfloor+1\ge k+1$.
Now, we partition $\mathcal{K}_i$ into subsets $W_1,\ldots,W_{|V(H_i)|-k}$ where
\[W_1 = \{ K \mid K \subset [k+1]\} \quad \mbox{and} \quad W_j = \{K\cup\{k+j\} \mid K \subset \{j,\ldots,k+j-1\}\}\]
 for $2 \le j \le |V(H_i)|-k$.
 Since any $k+1$
consecutive vertices of $H_i$ form a clique, $|W_1| = 2^{k+1}$ and $|W_j| =
2^k$ for $2 \le j \le |V(H_i)|-k$.
Therefore,
\begin{equation*} |\mathcal{K}_i| = \sum_{j=1}^{|V(H_i)|-k} |W_j| =
\left(|V(H_j)|-k+1\right)2^k.\end{equation*}
Since $\sum_{i=1}^t |V(H_i)|=n$, the number of cliques in $G_n$ is\[
\sum_{i=1}^t|\mathcal{K}_i|=\sum_{i=1}^t(|V(H_i)|-k+1)2^k=\left(n-(k-1)t\right)2^k\]
which completes the proof.\end{proof}

The following result follows immediately from Theorem~\ref{thm:toecliques} and Lemma~\ref{Toeplitz-chordal-1}.
\begin{corollary} Let $G_n$ be a Toeplitz graph with $\overline{G}_n:=T_n\langle t, 2t,\ldots, kt\rangle$ for some positive integer $t$. Then, \[
i(G_n) = \left(n-(k-1)t\right)2^k.\]\end{corollary}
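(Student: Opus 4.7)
The plan is to exploit the elementary duality between independent sets in a graph and cliques in its complement, together with the clique count from Theorem~\ref{thm:toecliques}. Every independent set of $G_n$ is a vertex subset no two of whose elements are adjacent in $G_n$, which is the same as a vertex subset every two of whose elements are adjacent in $\overline{G_n}$, i.e., a clique of $\overline{G_n}$ (with the empty set and singletons counted as cliques on both sides). Hence $i(G_n)$ equals the total number of cliques of $\overline{G_n}$.

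By hypothesis, $\overline{G_n}=T_n\langle t,2t,\ldots,kt\rangle$, which is precisely the family for which Theorem~\ref{thm:toecliques} provides a clique enumeration. The role of Lemma~\ref{Toeplitz-chordal-1} in the statement is mainly conceptual: for $n\ge t_k+t_{k-1}+1$, it identifies the Toeplitz graphs of the form $T_n\langle t,2t,\ldots,kt\rangle$ as exactly the chordal Toeplitz graphs, and it is chordality that makes a clean closed-form clique count possible. Substituting into Theorem~\ref{thm:toecliques} gives that the number of cliques of $\overline{G_n}$ equals $\left(n-(k-1)t\right)2^k$, and combining this with the duality above yields $i(G_n)=\left(n-(k-1)t\right)2^k$, as claimed.

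Since the proof is essentially a one-line deduction from two already-established results, there is no substantive obstacle to overcome. The only point requiring mild care is that the order hypothesis $n\ge (2k-1)t+1$ needed to invoke Theorem~\ref{thm:toecliques} (and, in the guise $n\ge t_k+t_{k-1}+1$, also Lemma~\ref{Toeplitz-chordal-1}) is implicit rather than explicit in the corollary; it should be understood as being carried over with the citation. No new combinatorial input beyond these two ingredients is required.
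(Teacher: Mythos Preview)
Your proof is correct and matches the paper's approach: the paper states that the corollary ``follows immediately from Theorem~\ref{thm:toecliques} and Lemma~\ref{Toeplitz-chordal-1},'' relying on the previously noted duality that independent sets in $G$ correspond to cliques in $\overline{G}$. Your additional remarks about the conceptual role of Lemma~\ref{Toeplitz-chordal-1} and the implicit order hypothesis $n\ge(2k-1)t+1$ are helpful clarifications that the paper leaves unstated.
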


\section{Independent sets of the Bell type Riordan graphs}\label{upperBoundsSec}

In Section~\ref{chordalToeplitzexact}, we introduced the notion of
association of binary words to give a lower bound for the number of
independent sets in a Toeplitz graph. In this section, we continue
to utilize association of binary words to study the number of
independent sets in Riordan graphs of the Bell type.

Let $G$ be a graph with vertices $1,\ldots,n$. We can associate a subset of vertices in a graph $G$ on $n$ vertices with a binary word $x_1x_2\cdots x_n$, where $x_i=1$ if the vertex $i$ is chosen to the subset, and $x_i=0$ otherwise.
We say that a binary word is {\em good} if it corresponds to an independent set in $G$, and the word is {\em bad} otherwise. An independent set of $G$ corresponds to a good word. Thus, the number of good words $x_1x_2\cdots x_n$ equals the number of independent sets in $G$.

A {\em factor} in a word $w=w_1w_2\cdots w_n$ is a subword of the form $w_iw_{i+1}\cdots w_j$. Given two binary sequences $X$ and $W$, we say that $X$ is $W$-{\it avoiding} if $W$ is not a factor of $X$. For example, a binary sequence $1010$ is $11$-avoiding whereas $1100$ is not $11$-avoiding.
It is shown in \cite[Theorem 3.1]{CJKM} that a proper Riordan graph has a Hamiltonian path.
In what follows, we assume that the vertices of a proper Riordan graph $G_n(g,f)$ are labelled by $1,\ldots,n$ so that one of the Hamiltonian paths in $G_n(g,f)$ is $1\rightarrow 2\rightarrow \cdots\rightarrow n$.
Thus, a vertex set $S$ of a proper Riordan graph is an independent set only if the binary sequence associated with $S$ is $11$-avoiding.
Hence, given a proper Riordan graph $G_n(g,f)$, an upper bound for the number of $11$-avoiding good binary words $x_1x_2\cdots x_n$ is an upper bound for the number of independent sets in $G_n(g,f)$.
Thus, in the arguments below, we estimate the number of $11$-avoiding good words.

Recall that the $n$-th Fibonacci number $F(n)$ is defined recursively as follows: $F(0)=F(1)=1$, $F(n)=F(n-1)+F(n-2)$. The following fact is well-known and is easy to prove.

\begin{lemma}\label{fib-lem} The number of $11$-avoiding binary words of length $n\geq 0$ is given by $F(n+1)$.\end{lemma}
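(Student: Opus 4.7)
The plan is to prove the claim by setting up a simple linear recurrence on the length of the word and then observing that it coincides with the Fibonacci recurrence, with matching initial conditions.

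First I would let $a_n$ denote the number of $11$-avoiding binary words of length $n$, with the convention $a_0 = 1$ (counting the empty word). A direct check gives $a_1 = 2$, corresponding to the words $0$ and $1$, and $a_2 = 3$, corresponding to $00$, $01$, $10$.

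Next, for $n \geq 2$, I would derive a recurrence by conditioning on the first letter $x_1$ of a $11$-avoiding word $x_1 x_2 \cdots x_n$. If $x_1 = 0$, then $x_2 \cdots x_n$ can be any $11$-avoiding word of length $n-1$, contributing $a_{n-1}$ choices. If $x_1 = 1$, then we must have $x_2 = 0$ to avoid the factor $11$, and $x_3 \cdots x_n$ can be any $11$-avoiding word of length $n-2$, contributing $a_{n-2}$ choices. Hence $a_n = a_{n-1} + a_{n-2}$.

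Finally, since $F(n)$ satisfies the same recurrence $F(n) = F(n-1) + F(n-2)$ for $n \geq 2$ and since $a_0 = 1 = F(1)$ and $a_1 = 2 = F(2)$, a straightforward induction on $n$ yields $a_n = F(n+1)$ for all $n \geq 0$. There is no real obstacle here; the only thing to be careful about is aligning the shifted indexing $F(n+1)$ with the paper's convention $F(0) = F(1) = 1$, which is precisely why the base cases $n = 0, 1$ need to be verified explicitly before invoking the common recurrence.
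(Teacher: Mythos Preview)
Your proof is correct and is the standard argument; the paper itself does not give an explicit proof of this lemma, merely stating it as ``well-known and easy to prove.'' Your care in matching the base cases to the paper's convention $F(0)=F(1)=1$ is appropriate.
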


 \begin{theorem}\label{fib-initial-up-bound} If a graph $G$ with $n$ vertices has a Hamiltonian path $1\rightarrow  \cdots\rightarrow n$, then $i(G)\leq F(n+1)$. The equality holds for the path graph of length $n-1$.\end{theorem}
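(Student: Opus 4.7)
The plan is to translate independent sets into binary words of length $n$ in the standard way set up just before the statement, and then invoke Lemma~\ref{fib-lem} together with the Hamiltonian path hypothesis.

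First I would fix the encoding: to each subset $S\subseteq V(G)=\{1,\ldots,n\}$ associate the binary word $w(S)=x_1x_2\cdots x_n$ with $x_i=1$ iff $i\in S$. This is a bijection between subsets of $V(G)$ and binary words of length $n$, so $i(G)$ equals the number of words $w(S)$ for which $S$ is independent.

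Next I would use the Hamiltonian path hypothesis to restrict which words can occur. Since $1\to 2\to\cdots\to n$ is a Hamiltonian path in $G$, the edge $\{i,i+1\}$ lies in $E(G)$ for every $1\le i\le n-1$. Therefore, if $S$ is independent, it cannot contain both $i$ and $i+1$ for any such $i$, which means the word $w(S)$ contains no factor $11$, i.e. $w(S)$ is $11$-avoiding. Consequently, $i(G)$ is bounded above by the number of $11$-avoiding binary words of length $n$, and Lemma~\ref{fib-lem} gives this number as $F(n+1)$, yielding $i(G)\le F(n+1)$.

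For the equality case, I would observe that when $G$ is the path graph with edge set $\{\{i,i+1\}:1\le i\le n-1\}$, the only obstruction for $S$ to be independent is the presence of two consecutive elements, which is precisely the factor $11$ in $w(S)$. Hence the map $S\mapsto w(S)$ restricts to a bijection between independent sets of the path graph and $11$-avoiding binary words of length $n$, and so $i(G)=F(n+1)$ in this case.

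No step is really an obstacle here; the only thing to be careful about is the direction of the argument, namely that the Hamiltonian path condition only forces $11$-avoidance (it does not characterize independence), so we get an inequality in general and equality exactly when no further edges are present beyond those of the Hamiltonian path structure of the path graph.
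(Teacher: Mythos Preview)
Your proposal is correct and follows essentially the same approach as the paper: encode subsets as binary words, use the Hamiltonian path to force $11$-avoidance, apply Lemma~\ref{fib-lem} for the bound, and note that for the path graph there are no further edges so the bijection with $11$-avoiding words is exact. The paper's proof is just a terser version of what you wrote.
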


 \begin{proof} The first claim follows from Lemma~\ref{fib-lem} because any good word must avoid $11$, or else two adjacent vertices on the Hamiltonian path $1\rightarrow 2\rightarrow \cdots\rightarrow n$ will be chosen to an independent set, which is impossible. The second claim follows from the fact that if $G_n$ is a path graph with length $n-1$, then $G_n$ has no edges apart from those on the Hamiltonian path, and thus any $11$-avoiding binary word corresponds to an independent set in $G_n$. \end{proof}

Since a proper Riordan graph has a Hamiltonian path $1\rightarrow
2\rightarrow \cdots\rightarrow n$, we have the following corollary.
\begin{corollary} For a proper Riordan graph $G_n$ on $n$ vertices, $i(G_n)\leq F(n+1)$ with the equality holding if and only if $G_n = G_n(1,z)$.\label{cor:exact}
\end{corollary}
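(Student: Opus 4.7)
The plan is to split the argument into the inequality $i(G_n) \le F(n+1)$ and the characterization of when equality holds. The inequality is essentially immediate: since $G_n$ is a proper Riordan graph, it carries the Hamiltonian path $1 \to 2 \to \cdots \to n$ by \cite[Theorem~3.1]{CJKM}, so Theorem~\ref{fib-initial-up-bound} applies verbatim and yields $i(G_n) \le F(n+1)$.

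For the forward direction of the equality, I would compute the adjacency matrix of $G_n(1,z)$ directly and observe that it coincides with the path graph on the vertex set $\{1, 2, \ldots, n\}$. With $g=1$ and $f=z$, the Riordan matrix $(zg,f)_n = (z,z)_n$ has $(i,j)$-entry (for $i\ge j$) equal to $[z^{i-2}] g f^{j-1} = [z^{i-2}] z^{j-1}$, which equals $1$ precisely when $i = j+1$ and vanishes otherwise. Symmetrizing produces the tridiagonal $(0,1)$-matrix whose edges are exactly $\{i,i+1\}$ for $1 \le i \le n-1$; this is the adjacency matrix of the path graph $P_n$. The equality clause of Theorem~\ref{fib-initial-up-bound} then gives $i(G_n(1,z)) = i(P_n) = F(n+1)$.

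For the backward direction, suppose $i(G_n) = F(n+1)$. Every independent set of $G_n$ corresponds to a binary word $x_1 x_2 \cdots x_n$ whose ones indicate the chosen vertices, and because $G_n$ contains the Hamiltonian path $1 \to 2 \to \cdots \to n$, every such good word must avoid the factor $11$. By Lemma~\ref{fib-lem} there are exactly $F(n+1)$ such words, so the assumed equality forces every $11$-avoiding word to encode an independent set. In particular, if $G_n$ had any edge $\{i,j\}$ with $|i-j| \ge 2$, the binary word with ones only at positions $i$ and $j$ would be $11$-avoiding yet would not encode an independent set, a contradiction. Hence $G_n$ has no edges outside the Hamiltonian path, so $G_n = P_n = G_n(1,z)$.

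The only step requiring attention is the adjacency-matrix computation identifying $G_n(1,z)$ with $P_n$; after that, the characterization of equality is a direct sharpness argument built on Lemma~\ref{fib-lem} and the already-established Theorem~\ref{fib-initial-up-bound}, so I do not anticipate a genuine obstacle.
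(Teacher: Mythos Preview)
Your argument is correct and follows the same approach as the paper: the paper simply states the corollary as an immediate consequence of Theorem~\ref{fib-initial-up-bound} together with the fact that a proper Riordan graph has the Hamiltonian path $1\to 2\to\cdots\to n$, giving no further details. Your write-up supplies those details---the explicit identification of $G_n(1,z)$ with the path $P_n$ and the sharpness argument for the ``only if'' direction---which the paper leaves implicit.
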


 In fact, any graph we deal with in this section has a Hamiltonian path, since our concern will be in proper Riordan graphs, so we can assume that any graph on $n$ vertices in this section has the Hamiltonian path $1\rightarrow 2\rightarrow \cdots\rightarrow n$.

We shall give an upper bound for the number of independent sets in
an io-decomposable Riordan graph of the Bell type in terms of the
Pell numbers. The {\em $n$-th Pell number}, denoted by $P_n$, is
defined by \[ P_0 = 0, \ P_1 = 1, \ P_n= 2P_{n-1} + P_{n-2},\] and
it is known that
\begin{align*}
P_n={(1+\sqrt{2})^n-(1-\sqrt{2})^n\over 2\sqrt{2}}
\end{align*}
and the respective generating function is given by
\begin{align}\label{generating-function-lambda}
P(z)=\sum_{n\ge0}P_nz^n={z\over 1-2z-z^2}.
\end{align}
Let $\Delta_n$ and $\tilde\Delta_n$ denote the graphs with $n$
vertices labeled by $1,2,\ldots,n$, and the edge sets given by
\begin{align*}
E(\Delta_n)=\{i(i+1)\;|\;1\le i< n\}\cup\{(2i-1)(2i+1)\;|\;1\le i\le
\lfloor(n-1)/2\rfloor\}
\end{align*}
and
\begin{align*}
E(\tilde\Delta_n)=\{i(i+1)\;|\;1\le i< n\}\cup\{(2i)(2i+2)\;|\;1\le
i\le \lfloor(n-2)/2\rfloor\},
\end{align*}
respectively. One can easily see that
$\Delta_{2n}\cong\tilde\Delta_{2n}$ and
$\Delta_{2n+1}\not\cong\tilde\Delta_{2n+1}$ for $n\ge1$. We denote
$i(\Delta_n)$ and $i(\tilde\Delta_n)$ by $\delta_n$ and
$\tilde\delta_n$, respectively. We set $\delta_0 = \tilde{\delta}_0
=1$.

\begin{lemma}
For $n\ge1$, we have
\begin{align}\label{explicit-delta}
\delta_n=\left\{
\begin{array}{ll}
2P_{(n+1)/2}&
\text{if $n$ is odd,} \\
P_{n/2}+P_{n/2+1} & \text{otherwise;}
\end{array}
\right.
\end{align}
and
\begin{align}\label{explicit-bardelta}
\tilde\delta_n=\left\{
\begin{array}{ll}
P_{(n-1)/2}+2P_{(n+1)/2}&
\text{if $n$ is odd,} \\
P_{n/2}+P_{n/2+1} & \text{otherwise.}
\end{array}
\right.
\end{align}\label{lem:delta}
\end{lemma}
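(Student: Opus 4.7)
The plan is to set up parity-sensitive recurrences for $\delta_n$ and $\tilde\delta_n$ and then verify that the claimed closed forms satisfy them, using the Pell recurrence $P_m=2P_{m-1}+P_{m-2}$. I would derive the recurrences by conditioning on whether the top vertex $n$ lies in an independent set $S$, and then use the fact that the neighborhood of $n$ in $\Delta_n$ (resp.\ $\tilde\Delta_n$) depends on the parity of $n$.

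For $\Delta_n$: vertex $n$ is joined to $n-1$ and, if $n$ is odd, also to $n-2$. If $n\notin S$, removing $n$ leaves an independent set of the induced subgraph on $\{1,\dots,n-1\}$, and a quick inspection of the edge set shows this subgraph is $\Delta_{n-1}$. If $n\in S$, then $n-1\notin S$ (always) and $n-2\notin S$ (when $n$ is odd), so for $n$ even we get an independent set of $\Delta_{n-2}$ and for $n$ odd one of $\Delta_{n-3}$. This yields
\[
\delta_n=\begin{cases}\delta_{n-1}+\delta_{n-2}& n \text{ even},\\ \delta_{n-1}+\delta_{n-3}& n \text{ odd},\end{cases}
\]
with initial values $\delta_0=1$, $\delta_1=2$, $\delta_2=3$ (easy to list). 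The analogous case analysis for $\tilde\Delta_n$ (where now $n$ has an extra neighbor $n-2$ precisely when $n$ is even) gives
\[
\tilde\delta_n=\begin{cases}\tilde\delta_{n-1}+\tilde\delta_{n-3}& n \text{ even},\\ \tilde\delta_{n-1}+\tilde\delta_{n-2}& n \text{ odd}.\end{cases}
\]

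Having the recurrences, I would prove \eqref{explicit-delta} and \eqref{explicit-bardelta} by strong induction on $n$. For \eqref{explicit-delta} with $n$ even, I substitute the inductive formulas to get $\delta_{n-1}+\delta_{n-2}=2P_{n/2}+P_{n/2-1}+P_{n/2}=3P_{n/2}+P_{n/2-1}$, which equals $P_{n/2}+P_{n/2+1}$ by one application of the Pell recurrence. For $n$ odd, the identity $2P_{(n+1)/2}=P_{(n+1)/2}+2P_{(n-1)/2}+P_{(n-3)/2}$ (again the Pell recurrence) matches $\delta_{n-1}+\delta_{n-3}$. The two cases of \eqref{explicit-bardelta} reduce, after the same substitution, to verifying $2P_{(n-2)/2}+P_{(n-4)/2}=P_{n/2}$ and $2P_{(n+1)/2}-4P_{(n-1)/2}=2P_{(n-3)/2}$, both of which are immediate from $P_m=2P_{m-1}+P_{m-2}$.

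The main obstacle is the structural step rather than the algebra: I have to check carefully that removing $n$ (and its forced-out neighbors) really produces the smaller graph of the same family, and in particular that the odd-odd edges of $\Delta_n$ restricted to $\{1,\dots,n-2\}$ coincide with the odd-odd edges of $\Delta_{n-2}$ (and likewise for the even-even edges of $\tilde\Delta_n$ restricted to $\{1,\dots,n-3\}$). This follows from a direct comparison of the index ranges in the definitions of $E(\Delta_m)$ and $E(\tilde\Delta_m)$, but the parities must be tracked carefully; once this bookkeeping is done the rest of the proof is a routine induction.
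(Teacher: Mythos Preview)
Your argument is correct. The recurrences you obtain coincide with the paper's (the paper writes $\delta_{2m}=\delta_{2m-1}+\delta_{2m-2}$ and $\delta_{2m+1}=\delta_{2m}+\delta_{2m-2}$, which are exactly your two cases), and your inductive verification of the closed forms via $P_m=2P_{m-1}+P_{m-2}$ goes through as stated. The structural step you flag as the ``main obstacle'' is indeed routine: restricting $E(\Delta_n)$ or $E(\tilde\Delta_n)$ to an initial segment $\{1,\dots,m\}$ simply cuts the index ranges in the defining unions, and one checks that the resulting ranges match those of $\Delta_m$ or $\tilde\Delta_m$.

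The route differs from the paper's in two minor respects. First, the paper rephrases independent sets of $\Delta_n$ as binary words $x_1\cdots x_n$ for which both $x_1\cdots x_n$ and the odd subword $x_1x_3\cdots$ are $11$-avoiding, and derives the recurrences by casing on the last letter; you do the equivalent deletion directly on the graph. Second, and more substantively, once the recurrences are in hand the paper passes to the generating function $\sum_{n\ge 0}\delta_n z^n=(1+2z+z^2)/(1-2z^2-z^4)$ and reads off the Pell expressions by comparison with $P(z)=z/(1-2z-z^2)$, whereas you verify the formulas by strong induction. Both methods are standard and of comparable length here; the paper's word reformulation (the sets $B_n$, $\tilde B_n$) is not merely cosmetic, though, since it is reused in the proofs of the subsequent upper-bound theorems, which is presumably why that framing was chosen.
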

\begin{proof}
Let $B_n$ be the collection of binary words $x_1x_2\cdots x_n$ such
that
\begin{align}\label{string-11-avoiding}
\textrm{both $x_1x_2\cdots x_n$ and $x_1x_3\cdots x_{2\lceil
n/2\rceil-1}$ are 11-avoiding.}
\end{align}
Since $\Delta_n$ and the subgraph of $\Delta_n$ induced by
$\{1,3,\ldots,{2\lceil n/2\rceil-1}\}$ are Hamiltonian, $\delta_n
\le |B_n|$. In addition, since the set of edges of $\Delta_n$ can be
partitioned into the set of edges on the path $1\rightarrow
2\rightarrow \cdots\rightarrow n$ and the set of edges on the path
$1\rightarrow 3\rightarrow \cdots\rightarrow (2\lceil n/2\rceil-1)$,
$|B_n| \le \delta_n$ and so
\begin{equation}  |B_n|=\delta_n.\label{eq:deltab}\end{equation}

By definition, every binary word in $B_{n}$ is of the following form
\begin{align*}
x_1x_2\cdots x_{n-1}0\;\;{\rm or}\;\;x_1x_2\cdots x_{n-2}01.
\end{align*}
\begin{itemize}
\item[($\dag$)] In particular, if $n$ is odd, then every word in $B_{n}$ is of the form
\begin{align*}
x_1x_2\cdots x_{n}0\;\;{\rm or}\;\;x_1x_2\cdots x_{n-3}001
\end{align*}
\end{itemize}
by the second condition in \eqref{string-11-avoiding}.
 Thus, for any positive integer $m$,
\begin{align}\label{recurrence-relation-delta_n}
\delta_{2m}=\delta_{2m-1}+\delta_{2m-2}\;\;{\rm
and}\;\;\delta_{2m+1}=\delta_{2m}+\delta_{2m-2}
\end{align}
with the initial values $\delta_0=1,\delta_1=2$ and $\delta_2=3$. Using
(\ref{recurrence-relation-delta_n}), one can obtain the following
generating function
\begin{align}\label{generating-function-delta_n}
\sum_{n\ge0}\delta_nz^n={1+2z+z^2\over 1-2z^2-z^4}.
\end{align}
Hence, by (\ref{generating-function-lambda}) and
(\ref{generating-function-delta_n}), we obtain
(\ref{explicit-delta}).

Let $\tilde B_n$ be the collection of binary words $x_1x_2\cdots
x_n$ such that
\begin{align}\label{string-11-avoiding-1}
\textrm{both $x_1x_2\cdots x_n$ and $x_2x_4\cdots x_{2\lfloor
n/2\rfloor}$ are 11-avoiding.}
\end{align}
By applying an argument similar to the one used for $B_n$, one can show that
\begin{equation} |\tilde B_n| = \tilde\delta_n \label{eq:deltab2} \end{equation}
whose generating function is
\begin{equation*} \sum_{n\ge0}\tilde\delta_nz^n={1+2z+z^2+z^3\over 1-2z^2-z^4}. \end{equation*}
Therefore, by (\ref{generating-function-lambda}), we obtain (\ref{explicit-bardelta}). Hence the proof is complete. \end{proof}

Let $G_n$ be an io-decomposable Riordan graph of the Bell type with
$n$ vertices where $2^k<n\le2^{k+1}$ for an integer $k\ge2$. Then,
$G_n$ and $\left<V_o\right>$ are proper, so $G_n$ contains the path
$1\rightarrow 2\rightarrow \ldots\rightarrow n$ and the path
$1\rightarrow3\rightarrow \ldots\rightarrow (2\lceil n/2\rceil-1)$
as a subgraph. Thus, $\Delta_n$ is a spanning subgraph of $G_n$ and
so
\begin{align}\label{obvious-upper-bound}
i(G_n)\le \delta_n.
\end{align}
The following theorem improves the upper bound (\ref{obvious-upper-bound}).

\begin{theorem}\label{better-up-bound-io}
Let $G_n$ be an io-decomposable Riordan graph of the Bell type on
$n$ vertices. If $2^k<n\le2^{k+1}$ for an integer $k\ge2$, then
\begin{align}\label{upper-bound-io-decomposable}
i(G_n)\le
\delta_n-(\delta_{2^k-2}-1)\tilde\delta_{n-2^k-3}-\tilde\delta_{n-2^k-1}\sum_{i=1}^{k-1}(\delta_{2^i-2}-1)\tilde\delta_{2^i-3}\alpha_{i+1}
\end{align}
where  $\tilde\delta_m=1$ for $m<0$ and $\alpha_{i+1}=\left\{
\begin{array}{ll}
1&
\text{if $i=k-1$} \\
\prod_{j=i+1}^{k-1}\tilde\delta_{2^j-1} & \text{otherwise.}
\end{array}
\right.$
\end{theorem}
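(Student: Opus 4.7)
The plan is to sharpen the baseline bound $i(G_n)\le\delta_n$ from \eqref{obvious-upper-bound} by exhibiting additional edges of $G_n$ (beyond those in $\Delta_n$) and counting the binary words in $B_n$ that are bad, meaning they correspond to an independent set of $\Delta_n$ but not of $G_n$. The single structural input I would establish is the following claim: for any io-decomposable Riordan graph of the Bell type $G_n=G_n(g,zg)$ and any $\ell$ with $2^\ell+1\le n$, the vertex $2^\ell+1$ is adjacent in $G_n$ to every $v\in\{1,2,\ldots,2^\ell\}$.

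To prove this claim, I would use Theorem~\ref{e:th} to translate io-decomposability of $G_n(g,zg)$ into the parity identity $g_{2m+1}\equiv g_m\pmod 2$ for all $m\ge 0$; splitting $g$ by the parity of its index then yields $g(z)\equiv G_E(z^2)+z\,g(z^2)\pmod 2$ with $G_E(z)=\sum_{m\ge 0}g_{2m}z^m$. Because the adjacency reads $a_{2^\ell+1,v}\equiv[z^{2^\ell-v}]g^v\pmod 2$, the claim reduces to $[z^{2^\ell-v}]g^v\equiv 1\pmod 2$ for every $1\le v\le 2^\ell$, which I would establish by induction on $\ell$. For even $v=2u$, the identity $g^v\equiv g(z^2)^u$ gives $[z^{2^\ell-v}]g^v=[z^{2^{\ell-1}-u}]g^u$; for odd $v=2u+1$, the decomposition produces $g^v\equiv G_E(z^2)g(z^2)^u+z\,g(z^2)^{u+1}$, whose first summand contributes $0$ at the odd exponent $2^\ell-v$ and whose second yields $[z^{2^{\ell-1}-(u+1)}]g^{u+1}$. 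Both coefficients are $1$ modulo $2$ by induction.

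With the claim, the new edges of $G_n$ beyond $\Delta_n$ include $\{v,\,2^\ell+1\}$ for $v\in\{1,\ldots,2^\ell-2\}$, since the $\Delta_n$-neighbours of $2^\ell+1$ inside $\{1,\ldots,2^\ell\}$ are only $2^\ell-1$ and $2^\ell$. For $w\in B_n$ set $\ell^*:=\max\{\ell:x_{2^\ell+1}=1\}$; then the activation of some such new edge is equivalent to $\ell^*\ge 1$ together with the existence of $v\in\{1,\ldots,2^{\ell^*}-2\}$ with $x_v=1$, which yields a disjoint partition of bad words by $\ell^*\in\{1,\ldots,k\}$. For $\ell^*<k$, the forcings $x_{2^{\ell^*}+1}=1$ (zeroing $\{2^{\ell^*}-1,2^{\ell^*},2^{\ell^*}+2,2^{\ell^*}+3\}$ through $\Delta_n$) and $x_{2^\ell+1}=0$ for $\ell>\ell^*$ disconnect the remaining positions of $\Delta_n$ into a head $\{1,\ldots,2^{\ell^*}-2\}$ (isomorphic to $\Delta_{2^{\ell^*}-2}$, required nonempty and contributing $\delta_{2^{\ell^*}-2}-1$), a first block $\{2^{\ell^*}+4,\ldots,2^{\ell^*+1}\}$ (isomorphic to $\tilde\Delta_{2^{\ell^*}-3}$), successive middle blocks $\{2^\ell+2,\ldots,2^{\ell+1}\}$ for $\ell=\ell^*+1,\ldots,k-1$ (each isomorphic to $\tilde\Delta_{2^\ell-1}$, jointly contributing $\alpha_{\ell^*+1}$), and a final tail $\{2^k+2,\ldots,n\}$ (isomorphic to $\tilde\Delta_{n-2^k-1}$); the $\tilde\Delta$ appears because each of these intervals starts at an even integer, which makes the odd-odd edges of $\Delta_n$ become even-even edges after relabeling. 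For $\ell^*=k$ the middle blocks and final tail collapse into one block of size $n-2^k-3$, giving $(\delta_{2^k-2}-1)\tilde\delta_{n-2^k-3}$.

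Summing these disjoint contributions over $\ell^*\in\{1,\ldots,k\}$ and subtracting from $\delta_n$ gives exactly the stated inequality. The main obstacle is the structural claim: it is the only genuinely new ingredient beyond the block decomposition of $\Delta_n$, and its proof requires the two-level induction in $\mathbb{F}_2[[z]]$ using $g_{2m+1}\equiv g_m$. The remaining counting is essentially the block bookkeeping already used to analyse $\Delta_n$ in the proof of Lemma~\ref{lem:delta}.
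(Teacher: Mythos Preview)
Your proof is correct and follows essentially the same route as the paper: both arguments start from $i(G_n)\le\delta_n$, invoke the fact that each vertex $2^\ell+1$ dominates $\{1,\ldots,2^\ell\}$, partition the resulting bad words of $B_n$ according to the largest $\ell$ with $x_{2^\ell+1}=1$, and count each class by decomposing $\Delta_n$ into a $\Delta$-head and $\tilde\Delta$-blocks separated by the forced zeros. The only substantive difference is that you supply a self-contained $\mathbb{F}_2[[z]]$ proof of the domination property via the identity $g_{2m+1}\equiv g_m\pmod 2$, whereas the paper simply cites this fact from~\cite{CJKM}.
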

\begin{proof}
  It is known in the paper \cite{CJKM} that, for a positive integer $n$, an integer $k$
satisfying $2^k<n\leq 2^{k+1}$, and an io-decomposable Riordan graph
of the Bell type $G_n$, we obtain the statement that
 \begin{enumerate}
 \item[($\star$)] for a vertex $u \in C:=\{1,2,3,5,9,\ldots,2^k+1\}$ and a  vertex $v \in V(G_n)$ less than $u$, $u$ is adjacent to $v$.
 \end{enumerate}

Let $X=x_1x_2\cdots x_n$ be a binary word satisfying (\ref{string-11-avoiding}).
By ($\star$), $X$ is a bad word if\begin{itemize}
 \item[($\ddag$)] there are at least two indices $i$ and $j$ with $i<j$ such that $x_i=x_j = 1$ and $j\in C$.
 \end{itemize}
Now, we consider the set $A$ of bad words satisfying
(\ref{string-11-avoiding}) and ($\ddag$). For each $i \in C$, let
\begin{equation} A_i = \{ X \in A \mid x_i = 1, x_j = 0 \ \mbox{if}
\ i<j \ \mbox{and} \ j \in C\}.\label{eq:ai}\end{equation} Then,
$A_1$ and $A_2$ are empty sets and $\{A_i \mid i \in C \setminus
\{1,2\}\}$ is a collection of disjoint subsets of $A$.

By $(\dag)$, we have the following statements:
\begin{itemize}
\item a bad word in $A_{2^{k}+1}$ is of the following form
$$
x_1\cdots x_{2^k-2}00x_{2^k+1}00x_{2^k+4}\cdots x_n;
$$

\item a bad word in $A_{2^i+1}$ for some $i=2,3,\ldots,k-1$ is of the following form
$$
x_1\cdots x_{2^i-2}00x_{2^i+1}00x_{2^i+4}\cdots
x_{2^{i+1}}0x_{2^{i+1}+2}\cdots
x_{2^{k}}0x_{2^{k}+2}\cdots x_n.
$$
\end{itemize}
Let $S$ be a subgraph of $\Delta_n$ induced by the vertex set $\{s,\ldots,s+t\}\subseteq V(\Delta_n)$ for some positive integers $s\le n$ and $t\le n-s$. Then $S$ is isomorphic to $\Delta_{t+1}$ and $\tilde\Delta_{t+1}$ if $s$ is odd and even, respectively. Thus, by \eqref{eq:deltab} and \eqref{eq:deltab2}, we obtain the following
statements.
\begin{itemize}
\item[(i)] The number of words $x_1\cdots x_{2^i-2}$ with at least one $1$ satisfying \eqref{string-11-avoiding} is $\delta_{2^i-2}-1$ for each $2 \le i \le k$.
\item[(ii)] The number of words $x_{2^k+4}\cdots x_n$ satisfying \eqref{string-11-avoiding} is $\tilde\delta_{n-2^k-3}$ if $ n-2^k-3\ge 0$.
\item[(iii)] The number of words $x_{2^i+4} \cdots x_{2^{i+1}}$ satisfying \eqref{string-11-avoiding} is $\tilde\delta_{2^i-3}$ for each $2 \le i \le k-1$.
\item[(iv)] The number of words $x_{2^{i+1}+2} \cdots x_{2^{i+2}}$ satisfying \eqref{string-11-avoiding} is $\tilde\delta_{2^{i+1}-1}$ for each $2 \le i < k-1$.
\item[(v)] The number of words $x_{2^k+2} \cdots x_n$ satisfying \eqref{string-11-avoiding} is $\tilde\delta_{n-2^k-1}$.
\end{itemize}
 We let $\tilde\delta_{n-2^k-3} = 1$ if $ 2^k < n < 2^k -3$. Then, by observations (i)--(v), we have \[|A_{2^k+1}| = (\delta_{2^k-2}-1)\tilde\delta_{n-2^k-3}, \quad |A_{2^{k-1}+1}| =  \tilde\delta_{n-2^k-1} (\delta_{2^k-2}-1)\tilde\delta_{2^{k-1}-3},\] and\[
 |A_{2^i+1}| = \tilde\delta_{n-2^k-1}(\delta_{2^i-2}-1)\tilde\delta_{2^i-3}\prod_{j=i+1}^{k-1}\tilde\delta_{2^j-1}\] for $2 \le i < k-1$.
Hence we obtain the desired result.
\end{proof}

\begin{remark} From the way we obtain inequality~\eqref{upper-bound-io-decomposable}, we can see
the equality holds in \eqref{upper-bound-io-decomposable} if $G_n$ is a graph with $n$ vertices labeled as $1,2,\ldots,n$ and the edge set of $G_n$ is given by
$$E(G_n)=E(\Delta_n)\cup\{ij\;|\;1\le i<j\;{\rm and}\; j=2,3,5,\ldots,2^k+1 \}$$
for the integer $k$ satisfying $2^k< n\le 2^{k+1}$. We have checked that the graph $G_n$ is a Catalan graph $CG_n$ for $1\le n\le 5$, but it is not a Riordan graph for $n\ge 6$.
\end{remark}

The Pascal graph $PG_n=G_n\left(\frac{1}{1-z}, \frac{z}{1-z}\right)$ is an io-decomposable Riordan graph of the Bell type and so it satisfies the upper bound given in Theorem~\ref{better-up-bound-io}. As a matter of fact, an upper bound for the number of independent sets in $PG_n$ can be improved by utilizing some properties of $PG_n$.

\begin{theorem}\label{better-up-bound-Pascal-1} Let $2^k < n \leq 2^{k+1}$ and $k \geq 2$.
Then,
\begin{align*}
i(PG_n)\le &\delta_n+1+2^{\lfloor
n/2\rfloor-1}-(\delta_{2^k-2}-1)\tilde\delta_{n-2^k-3}\\
&-\tilde\delta_{n-2^k-1}\left(2\prod_{i=1}^{k-1}\tilde\delta_{2^i-1}+\sum_{i=1}^{k-1}(\delta_{2^i-2}-1)\tilde\delta_{2^i-3}\alpha_{i+1}\right)
\end{align*}
where  $\tilde\delta_m=1$ for $m<0$ and $\alpha_{i+1}=\left\{
\begin{array}{ll}
1&
\text{if $i=k-1$} \\
\prod_{j=i+1}^{k-1}\tilde\delta_{2^j-1} & \text{otherwise.}
\end{array}
\right.$
\end{theorem}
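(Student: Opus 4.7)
The plan is to refine the general bound of Theorem~\ref{better-up-bound-io} by exploiting additional adjacencies in $PG_n$ that are not guaranteed by the property $(\star)$ used for arbitrary io-decomposable Bell type graphs. By Lucas' theorem modulo $2$, two vertices $i>j$ of $PG_n$ are adjacent if and only if the binary expansion of $j-1$ is a submask of that of $i-2$. This yields many extra edges beyond those captured by $(\star)$, and each such edge turns a word in $B_n$ that would be good for a generic io-decomposable graph into a bad word for $PG_n$. The strategy is to enumerate these extra bad words and subtract their number from $\delta_n$, while adding back the overlap with the sets $A_i$ already used in the proof of Theorem~\ref{better-up-bound-io}.

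First, I would isolate the two most productive Pascal-specific adjacencies, which generate the bulk of the new bad words. Since $\binom{i-2}{0}\equiv 1\pmod 2$ for every $i\ge 2$, vertex $1$ is adjacent to every other vertex of $PG_n$; and since $\binom{i-2}{1}$ is odd precisely when $i$ is odd, vertex $2$ is adjacent to every odd vertex. Thus any word in $B_n$ with $x_1=1$ and at least one further $1$, or with $x_2=1$ and $x_j=1$ for some odd $j\ge 5$ (the case $j=3$ is already excluded by $11$-avoidance), is bad for $PG_n$ though it may be good for a generic io-decomposable graph. These two families are the source of the additional factor $2$ multiplying $\tilde\delta_{n-2^k-1}\prod_{i=1}^{k-1}\tilde\delta_{2^i-1}$.

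Second, following the block decomposition used in the proof of Theorem~\ref{better-up-bound-io}, I would stratify the new bad words according to the largest index in $C=\{1,2,3,5,9,\ldots,2^k+1\}$ where $x_i=1$, and then split the remaining positions into the consecutive blocks $[2^i+2,\,2^{i+1}]$ for $1\le i\le k-1$ and $[2^k+2,\,n]$. The relevant induced subgraphs are copies of $\tilde\Delta_{2^i-1}$ and $\tilde\Delta_{n-2^k-1}$, respectively, so by (\ref{eq:deltab2}) the number of admissible local configurations on these blocks is exactly $\prod_{i=1}^{k-1}\tilde\delta_{2^i-1}$ and $\tilde\delta_{n-2^k-1}$. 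Combined with the factor $2$ from the two Pascal-specific families above, this gives the promised extra subtraction.

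Finally, one must correct for overlaps between the new Pascal-specific bad words and the sets $A_i$ of the general proof (words whose two distinguished $1$'s both lie in $C$, captured by both arguments). Here the correction term $1+2^{\lfloor n/2\rfloor-1}$ enters: careful inclusion–exclusion shows that exactly the all-zero word (contributing the $+1$) together with the family of ``even-only'' configurations on $\{2,4,\ldots,2\lfloor n/2\rfloor\}$ compatible with $x_1=1$ or $x_2=1$ being the unique distinguished entry (contributing $2^{\lfloor n/2\rfloor-1}$) have been subtracted twice and must be added back. Substituting these counts into the bound of Theorem~\ref{better-up-bound-io} produces the claimed inequality. The main obstacle is precisely this overlap accounting: establishing that the two new families of bad words intersect each $A_i$ in a set whose cardinality aggregates cleanly to $1+2^{\lfloor n/2\rfloor-1}$ requires a direct enumeration tied to the arithmetic of powers of $2$, and does not follow from a generic union-bound argument.
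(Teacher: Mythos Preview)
Your overall strategy---exploit the two Pascal-specific adjacencies (P1) vertex $1$ is universal and (P2) vertex $2$ is adjacent to every odd vertex, then decompose the remaining positions into the blocks $[2^i+2,2^{i+1}]$---matches the paper. But your account of the correction term $1+2^{\lfloor n/2\rfloor-1}$ is wrong, and this is where the argument would break down.

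You claim these terms arise from inclusion--exclusion overlap between the new Pascal-specific families and the sets $A_i$ from Theorem~\ref{better-up-bound-io}, with the ``all-zero word'' contributing the $+1$. This cannot be right: the all-zero word corresponds to the empty independent set, is good for every graph, and is never subtracted even once. What actually happens is the opposite of an overlap correction. The paper avoids inclusion--exclusion entirely by \emph{defining} the new bad sets to be disjoint from $\bigcup_{i\in C}A_i$: one takes
\[
A'_1=\{X\in B_n: x_1=1,\ x_j=0\text{ for all }j\in C\setminus\{1\},\ x_l=1\text{ for some }l\notin C\},
\]
and analogously $A'_2$ with $x_2=1$ and the extra $1$ required at an \emph{odd} index outside $C$. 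The condition ``$x_j=0$ for $j\in C\setminus\{1\}$'' (resp.\ $\{2\}$) forces $A'_1,A'_2\subseteq B_n\setminus\bigcup A_i$ and also makes $A'_1\cap A'_2=\emptyset$. The block count for each family is then exactly $\tilde\delta_{n-2^k-1}\prod_{i=1}^{k-1}\tilde\delta_{2^i-1}$, but one must remove the words that are \emph{not} bad for $PG_n$: in $A'_1$ only the single word $10\cdots0$ (this is the source of the $+1$), and in $A'_2$ the $2^{\lfloor n/2\rfloor-1}$ words whose support lies entirely in $V_e$ (good because $\langle V_e\rangle$ is null). So the $1+2^{\lfloor n/2\rfloor-1}$ is a ``these words were never bad'' correction inside $A'_1$ and $A'_2$, not a double-counting correction against the $A_i$. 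If you pursue the inclusion--exclusion route you describe, the overlaps with the $A_i$ are far larger and do not collapse to $1+2^{\lfloor n/2\rfloor-1}$; the argument as written would not close.
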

\begin{proof}
  By the definition of $PG_n$, $ij\in E(PG_n)$ if and only if
$[z^{i-2}]{z^{j-1}\over (1-z)^j}\equiv 1 \pmod{2}$ where $n\ge
i>j\ge1$. By substituting $j=1$ and $j=2$, we obtain the
following two facts:
\begin{itemize}
\item[(P1)] the vertex $1$ in $PG_n$ is adjacent to all other vertices;
\item[(P2)] the vertex $2$ in $PG_n$ is adjacent to all odd  vertices for $n\ge
2$.
\end{itemize}

Let $C= \{1,2,3,5,9,\ldots,2^k+1\}$ and $B'_n=B_n \setminus \left(\cup_{i\in C} A_i \right)$ where $B_n$ was defined in the proof of Lemma~\ref{lem:delta} and $A_i$ was defined in \eqref{eq:ai}.
Consider the following two sets of words $X:=x_1x_2\cdots x_n$:
\[
 A'_1:=  \{ X \in B_n \mid x_1 = 1, x_j = 0\ \mbox{for each} \ j \in C \setminus\{1\}, x_l = 1\ \mbox{for some} \ l \notin C\};\]
 \[
A'_2:= \{ X \in B_n  \mid x_2 =1, x_j = 0\ \mbox{for each} \ j \in C \setminus\{2\}, x_l = 1\ \mbox{for some odd}\ l \notin C\}.\]
Since $X\not \in \cup_{i\in C} A_i$ if $X\in A_1'\cup A_2'$,  $A'_1 \cup A'_2$ is a subset of $B'_n$.

By (P1) and (P2), we can see that $A'_1$ and $A'_2$ are sets of bad words associated with $PG_n$.
Now, we count the words in each of $A'_1$ and $A'_2$, and subtract them from $|B'_n|$ to improve the upper bound given in Theorem~\ref{better-up-bound-io}.

 We count the number of words in $A'_j$ for each $j=1,2$. Fix $j\in \{1,2\}$. Take a word $X=x_1x_2\cdots x_n$ in $A'_j$. Then,  by the definition of $A'_j$,  $x_{2^{i}+1} = 0$ for each $1 \le i \le k$; $x_1=1$ and $x_2=x_3=0$ if $j=1$ and $x_1=x_3=0$ and $x_2=1$ if $j=2$.
 Therefore, in order to count the words in $A'_j$, it is sufficient to count the number of ways to determine the factors \[ x_4, x_6x_7x_8, \ldots,
x_{2^{k-1}+2}x_{2^{k-1}+3}\cdots x_{2^k}, x_{2^k+2}x_{2^k+3}\cdots
x_n \] so that $X$ satisfies \eqref{string-11-avoiding}. Take $i \in
\{1,\ldots,k-1\}$. Since $X \in B_n$, $X$ satisfies
\eqref{string-11-avoiding}. Thus, the factor
$x_{2^i+2}x_{2^i+3}\cdots x_{2^{i+1}}$ satisfies
\eqref{string-11-avoiding-1}. Therefore, $X \in A'_j$ if and only if
the factor $x_{2^i+2}x_{2^i+3}\cdots x_{2^{i+1}}$ satisfies  for
each $i \in \{1,\ldots,k-1\}$; $X \neq 100\ldots 0$ for $j=1$, and
not all odd entries of $X$ are $0$ for $j=2$.

 For each $i \in \{1,\ldots,k-1\}$, there are $\tilde\delta_{2^i-1}$   $x_{2^i+2}x_{2^i+3}\cdots x_{2^{i+1}}$ factors satisfying \eqref{string-11-avoiding-1}.
 In addition, there are  $\tilde\delta_{n-2^k-1}$ $x_{2^k+2}x_{2^k+3}\cdots x_n$ factors satisfying \eqref{string-11-avoiding-1}. Thus,
\begin{align}\label{the number of bad words P1}
|A'_1| = \tilde\delta_{n-2^k-1}\prod_{i=1}^{k-1}\tilde\delta_{2^i-1} -1.
\end{align}
In particular, if $j=2$, there are $2^{\left\lfloor \frac{n}{2}\right\rfloor-1}$ words in $B'_n$ such that $x_i=1$ only if $i$ is even. Thus,
\begin{align}\label{odd-univ-2}
|A'_2| =\tilde\delta_{n-2^k-1}\prod_{i=1}^{k-1}\tilde\delta_{2^i-1}\ -\ 2^{\left\lfloor \frac{n}{2}\right\rfloor-1}.
\end{align}
By~\eqref{the number of bad words P1} and~\eqref{odd-univ-2}, we obtain the desired result.
\end{proof}

The tools which we have developed to utilize association of binary
words can be used in finding the independence number and the number
of {\it maximal} independent sets in an io-decomposable Riordan
graph of the Bell type.

\begin{theorem}\label{maximal-independent-sets-io-decom}
The independence number of an io-decomposable Riordan graphs is
$\lfloor n/2\rfloor$. In particular, the number of maximal
independent sets in an io-decomposable Riordan graph is at most $2$
if $n$ is even and at most $4$ if $n$ is odd.
\end{theorem}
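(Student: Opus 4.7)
The plan is to identify each independent set $I\subseteq V(G_n)$ with its characteristic binary word $x_1x_2\cdots x_n$ and exploit the two Hamiltonian paths inherent to an io-decomposable Riordan graph of the Bell type. Since $G_n$ is proper it carries the path $1\to 2\to\cdots\to n$, and since $\langle V_o\rangle\cong G_{\lceil n/2\rceil}(g,f)$ is also proper it carries the path $1\to 3\to\cdots\to 2\lceil n/2\rceil -1$. As already observed just before Theorem~\ref{better-up-bound-io}, these two paths exhibit $\Delta_n$ as a spanning subgraph of $G_n$, so any independent set of $G_n$ gives a binary word $x_1\cdots x_n$ such that both $x_1\cdots x_n$ and its odd-indexed subword $x_1x_3\cdots x_{2\lceil n/2\rceil -1}$ are $11$-avoiding.

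For the independence number itself, the lower bound $\alpha(G_n)\ge\lfloor n/2\rfloor$ is immediate because $\langle V_e\rangle$ is null, so $V_e$ is an independent set of size $\lfloor n/2\rfloor$. For the matching upper bound I split on the parity of $n$. When $n=2m$, the maximum possible weight of a $11$-avoiding binary word of length $n$ is already $m$, giving $\alpha(G_n)\le m$. When $n=2m+1$, the maximum weight of a $11$-avoiding word of length $n$ is $m+1$, but this maximum is attained only by the word $1010\cdots 01$, whose associated vertex set is exactly $V_o$; since $\langle V_o\rangle$ is a proper Riordan graph on $m+1\ge 2$ vertices it already carries the edge $\{1,3\}$ from its own Hamiltonian path, so $V_o$ is not independent in $G_n$ and the weight of any good word is at most $m$. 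In both parities this yields $\alpha(G_n)=\lfloor n/2\rfloor$.

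For the number of maximum independent sets I would enumerate the $11$-avoiding binary words of weight $\lfloor n/2\rfloor$ whose odd subword is also $11$-avoiding. Set $p_i=(x_{2i-1},x_{2i})\in\{00,01,10\}$ for $i=1,\ldots,\lfloor n/2\rfloor$, together with a dangling bit $x_{2m+1}$ when $n=2m+1$. A short inspection shows that the two $11$-avoiding constraints are jointly equivalent to the implication $p_i\neq 00\Rightarrow p_{i+1}\neq 10$, since $p_i\neq 00$ means $x_{2i-1}=1$ or $x_{2i}=1$ and either possibility forces $x_{2i+1}=0$. When $n=2m$, weight $m$ forces $p_i\in\{01,10\}$ for every $i$, and the implication cascades to $p_2=\cdots=p_m=01$; only $p_1\in\{01,10\}$ is free, yielding at most the two candidates $V_e$ and $\{1\}\cup\{4,6,\ldots,2m\}$. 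When $n=2m+1$ I branch on the dangling bit: if $x_{2m+1}=0$ the problem reduces to the previous even analysis and contributes at most two words, while if $x_{2m+1}=1$ the two $11$-avoiding conditions force $x_{2m-1}=x_{2m}=0$, so $p_m=00$ and the same cascade on $p_1,\ldots,p_{m-1}$ again leaves only $p_1\in\{01,10\}$ free, contributing at most two further words. Altogether at most four candidate words in the odd case.

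The genuinely nontrivial step, and my anticipated main obstacle, is the odd-$n$ case of the upper bound on $\alpha$: the two $11$-avoiding constraints alone permit a word of weight $m+1$, so one must make decisive use of the recursive io-decomposability, via the observation that the uniquely heaviest admissible word picks out the whole vertex set $V_o$ on which $\langle V_o\rangle$ already carries the edge $\{1,3\}$. Once this step is settled, the rest reduces to the finite case analysis driven by the implication $p_i\neq 00\Rightarrow p_{i+1}\neq 10$.
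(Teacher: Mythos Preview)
Your proof is correct and follows essentially the same route as the paper's: both exploit the two $11$-avoiding constraints coming from $\Delta_n\subseteq G_n$, deduce $\alpha(G_n)=\lfloor n/2\rfloor$, and then enumerate the at most two (resp.\ four) maximum-weight admissible words --- the paper by directly excluding each odd vertex $d$ with $3\le d<n$ from any maximum independent set, and you via the equivalent pair-cascade $p_i\neq 00\Rightarrow p_{i+1}\neq 10$, arriving at the same candidate list $V_e$, $(V_e\setminus\{2\})\cup\{1\}$, and (for odd $n$) $(V_e\setminus\{n-1\})\cup\{n\}$, $(V_e\setminus\{2,n-1\})\cup\{1,n\}$. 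You are in fact more explicit than the paper on the odd-$n$ upper bound for $\alpha$, correctly noting that the bare $11$-avoiding constraint only gives $\alpha\le m+1$ and that one must observe the unique weight-$(m{+}1)$ word encodes $V_o$, which already carries the edge $\{1,3\}$; the paper glosses over this step.
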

\begin{proof} Let $G_n$ be an io-decomposable Riordan graph and $\alpha(G_n)$ denote the independence number of $G_n$. First we
show that $\alpha(G_n)=\lfloor n/2\rfloor$. Since $\left<V_e\right>$
is a null graph, $V_e$ is an independent set. Thus, $\alpha(G_n)\ge
|V_e|=\lfloor n/2\rfloor$.
  Since $G_n$ is proper, a good binary word $x_1x_2\cdots x_n$ associated with $G_n$ is $11$-avoiding.
  It leads to $\alpha(G_n)\le\lfloor n/2\rfloor$. Thus, $\alpha(G_n)=\lfloor n/2\rfloor$.

Let $d$ be an odd integer with $3\le d<n$. Since $\langle V_o \rangle$ is proper, a subword $x_1x_3\cdots x_{2\lceil n/2\rceil-1}$ of a good binary word $x_1x_2\cdots x_n$ associated with $G_n$ is $11$-avoiding as well.
Therefore, if an independent set $I$ contains a vertex $d$, then $I$ contains none of $d-2$, $d-1$, $d+1$ and so $|I| \le \frac{n-3}{2} <\lfloor n/2\rfloor$.
 Thus, there is no maximal independent set containing an odd integer in between $1$ and $n$.
  Hence, the possible maximal independent sets are given by the following:
\begin{itemize}
\item[(i)] $V_e$ and $(V_e\setminus \{2\})\cup\{1\}$ if $n$ is even;
\item[(ii)] $V_e$, $(V_e\setminus \{2\})\cup\{1\}$, $(V_e\setminus
\{n-1\})\cup\{n\}$ and  $(V_e\setminus \{2,n-1\})\cup\{1,n\}$ if
$n\ge5$ is odd,
\end{itemize}
which implies the desired result.
\end{proof}

\begin{corollary}
The Pascal graph $PG_n$ has a unique maximal independent set $V_e$
if $n$ is an even integer greater than $2$ or $n=2^k+1$ for some integer $k\ge 2$.
\end{corollary}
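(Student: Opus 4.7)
The plan is to combine Theorem~\ref{maximal-independent-sets-io-decom}, which already restricts the maximal independent sets of $PG_n$ (as an io-decomposable Riordan graph of the Bell type) to at most two candidates when $n$ is even and at most four when $n$ is odd, with two adjacency facts about $PG_n$ that have been recorded inside earlier proofs: (P1) vertex $1$ is adjacent to every other vertex of $PG_n$ (from the proof of Theorem~\ref{better-up-bound-Pascal-1}); and ($\star$) every $u \in C = \{1,2,3,5,9,\ldots,2^k+1\}$ is adjacent in $PG_n$ to every vertex with a smaller label, whenever $2^k < n \le 2^{k+1}$ (from the proof of Theorem~\ref{better-up-bound-io}). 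The goal is then to show that under the stated hypothesis on $n$, every non-$V_e$ candidate in Theorem~\ref{maximal-independent-sets-io-decom} loses independence.

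For even $n > 2$, Theorem~\ref{maximal-independent-sets-io-decom}(i) leaves only one alternative to $V_e$, namely $(V_e \setminus \{2\}) \cup \{1\}$. Because $n \ge 4$, this set contains both $1$ and $4$, and (P1) puts an edge between them, so the candidate is not independent; hence $V_e$ is the unique maximal independent set.

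For $n = 2^k + 1$ with $k \ge 2$, Theorem~\ref{maximal-independent-sets-io-decom}(ii) supplies three non-$V_e$ candidates. The two containing vertex $1$, namely $(V_e \setminus \{2\}) \cup \{1\}$ and $(V_e \setminus \{2, n-1\}) \cup \{1, n\}$, are eliminated at once by (P1), using $n \ge 5$ to furnish a second element (vertex $4$ in the first and vertex $n$ in the second). For the remaining candidate $(V_e \setminus \{n-1\}) \cup \{n\}$, I would observe that $2^k < n \le 2^{k+1}$, so $n \in C$; then ($\star$) forces $n$ to be adjacent to $2 \in V_e \setminus \{n-1\}$ (which lies in the set because $k \ge 2$ gives $n-1 = 2^k \ne 2$), killing independence.

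No step is a real obstacle: the whole argument is a direct bookkeeping application of the earlier structural results. The only thing needing a small verification is that in each non-$V_e$ candidate the universal vertex (vertex $1$, or in one case vertex $n$) really co-exists with a second vertex of the set under the given hypotheses on $n$; this is precisely where the conditions $n > 2$ in the even case and $k \ge 2$ in the odd case enter.
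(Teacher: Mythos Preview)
Your argument is correct and follows essentially the same route as the paper: both use Theorem~\ref{maximal-independent-sets-io-decom} to reduce to the short list of candidates and then eliminate the non-$V_e$ ones via universality of vertex~$1$ (your (P1)) and of vertex $n=2^k+1$ (you via ($\star$), the paper via the same fact cited directly from~\cite{CJKM}). The only cosmetic difference is that the paper argues abstractly that any candidate containing a universal vertex would have to be a singleton, while you exhibit a specific second vertex in each case.
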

\begin{proof} By Theorem~\ref{maximal-independent-sets-io-decom}, $\alpha(PG_n) = \lfloor \frac{n}{2} \rfloor$.
Since $\lfloor\frac{n}{2} \rfloor >1$ for $n \ge 4$, a maximal independent set of $PG_n$ has the size greater than $1$ if $n \ge 4$.

Assume that $n$ is an even integer greater than $2$ or $n=2^k+1$ for some integer $k \ge 2$.
By the definition of Pascal graph, the vertex $1$ is adjacent to all the other vertices in $PG_n$ for all $n\ge2$.
Therefore, an independent set which contains $1$ cannot have any other elements.
If $n$ is an even integer greater than $2$, then  a maximal independent set of $PG_n$ does not contain $1$ for $n \ge 4$ and so $V_e$ is a unique maximal independent set of $PG_n$ by  (i) in the proof of Theorem \ref{maximal-independent-sets-io-decom}.

Now, we assume that $n=2^k+1$ for some integer $k \ge 2$. It is known \cite{CJKM} that the vertex $2^k+1$ in
an io-decomposable Riordan graph $G_n(g,zg)$ is adjacent to all the other vertices if $n=2^k+1$.
Therefore, an independent set which contains $2^k+1$ cannot have any other elements.
 Since $k \ge 2$, $2^k+1 \ge 4$ and so a maximal independent set of $PG_n$ contains none of $1$ and $2^k+1$. Thus, $V_e$ is a unique maximal independent set of $PG_n$ by  (ii) in the proof of Theorem~\ref{maximal-independent-sets-io-decom}.\end{proof}

Next, we give a lower bound for the number of independent sets in a graph through a decomposition of graphs.
 For a $(0,1)$-matrix $M$, we denote by $\sigma_0(M)$ and $\sigma_1(M)$ the number of $0$'s in $M$ and the number of $1$'s in $M$, respectively.
 \begin{theorem}\label{Rio1}
 Let $G$ be a graph with $n$ vertices labeled by $1,2,\ldots,n$. Then \[i(G)\ge i(\langle V_o\rangle)+i(\langle V_e\rangle)+\lceil n/2\rceil\ \lfloor n/2\rfloor-|E(G)|+|E(\langle V_o\rangle)|+|E(\langle V_e\rangle)|.\]
 \end{theorem}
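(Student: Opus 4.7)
The plan is to lower-bound $i(G)$ by exhibiting three (essentially) disjoint families of independent sets of $G$, one for each way a set can interact with the bipartition $V(G)=V_o\sqcup V_e$, and then summing their sizes to recover the right-hand side.

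First, I would use the fact that $\langle V_o\rangle$ and $\langle V_e\rangle$ are induced subgraphs of $G$, so every independent set of $\langle V_o\rangle$ (respectively $\langle V_e\rangle$) is automatically independent in $G$. This yields $i(\langle V_o\rangle)$ independent sets of $G$ contained in $V_o$ and $i(\langle V_e\rangle)$ contained in $V_e$; apart from the empty set the two families are disjoint since $V_o\cap V_e=\emptyset$.

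Second, I would produce enough independent sets that genuinely straddle the bipartition, namely the two-element sets $\{u,v\}$ with $u\in V_o$, $v\in V_e$, and $uv\notin E(G)$. Each such pair is independent in $G$ and, meeting both sides of the bipartition, is distinct from every set in the first family. The number of such pairs is a direct calculation: starting from the $|V_o|\cdot|V_e|=\lceil n/2\rceil\lfloor n/2\rfloor$ candidate cross-pairs, one subtracts the crossing edges of $G$, which are counted by $|E(G)|-|E(\langle V_o\rangle)|-|E(\langle V_e\rangle)|$. This yields exactly the remaining summand
\[
\lceil n/2\rceil\lfloor n/2\rfloor-|E(G)|+|E(\langle V_o\rangle)|+|E(\langle V_e\rangle)|.
\]

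The hard part is really just careful bookkeeping: I need to confirm that the three families above are combined without over-counting (the only overlap being the empty set, shared by the first two families) and that each constructed set is indeed independent in $G$. Once this is checked, summing the three contributions gives the claimed lower bound for $i(G)$; no deeper structural input about $G$ is required, so the argument is completely general and uses nothing beyond the definition of induced subgraphs and elementary edge-counting across the odd/even bipartition.
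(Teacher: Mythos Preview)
Your approach is essentially identical to the paper's: the paper rewrites the adjacency matrix in odd-then-even order to isolate the cross block $B$, and then lower-bounds $i(G)$ by exactly your three families---independent sets inside $\langle V_o\rangle$, inside $\langle V_e\rangle$, and non-adjacent cross-pairs counted by $\sigma_0(B)=\lceil n/2\rceil\lfloor n/2\rfloor-\bigl(|E(G)|-|E(\langle V_o\rangle)|-|E(\langle V_e\rangle)|\bigr)$. The paper does not mention the empty-set overlap you flag; as you noticed, that overlap means the displayed sum double-counts $\emptyset$ (so the inequality as literally written is off by~$1$, e.g.\ for $G=K_2$), and neither argument subtracts it---but modulo that bookkeeping point, your plan and the paper's proof are the same.
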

 \begin{proof}
  Let $A$ be the adjacency matrix of $G$ such that the rows and columns $1,2,\ldots,n$ correspond to the vertices labeled in the order $1,3, \ldots, 2\lceil n/2\rceil-1, 2, 4,
\ldots, 2\lfloor n/2\rfloor$. Then,
 $$A=\left[
     \begin{array}{cc}
       X & B \\
       B^T & Y \\
     \end{array}
   \right]
 $$
where $X$ is the adjacency matrix of  $\langle V_o\rangle$ and $Y$ is the adjacency matrix of $\langle V_e\rangle$.
We consider three types of independent sets in $G$ as follows: the independent sets in $\langle V_o \rangle$; the independent sets in $\langle V_e \rangle$; the independent sets of size $2$ formed by an element in $V_o$ and an element in $V_e$. Note that an independent set of the third type corresponds to a $0$ in $B$. Therefore,
\begin{equation} \label{ah4}
 i(G_n)\ge i(\langle  V_o\rangle)+i(\langle  V_e\rangle)+\sigma_0(B)
 \end{equation}
  Since an edge between $V_o$ and $V_e$ corresponds to a $1$ in $B$, we have
    \[\sigma_1(B)=|E(G)|-|E(\langle V_o\rangle)|-|E(\langle V_e\rangle)|.\] Since $\sigma_0(B)+\sigma_1(B)=\lceil n/2\rceil \lfloor  n/2\rfloor$,
  \[\sigma_0(B)=\lceil
n/2\rceil\ \lfloor n/2\rfloor-|E(G)|+|E(\langle V_o\rangle)|+|E(\langle V_e\rangle)|.\]
By substituting this into (\ref{ah4}), we obtain the desired result. \end{proof}

The following result is a corollary of Theorem~\ref{Rio1}.
 \begin{corollary}\label{cor-io-dec}
 Let $G_n:=G_n(g,f)$ be an io-decomposable Riordan graph with $n\ge2$ vertices. Then, $$i(G_n)\ge i(G_{\lceil n/2\rceil})+2^{\lfloor n/2\rfloor}+\lceil n/2\rceil\,\lfloor n/2\rfloor-|E(G_n)|+|E(G_{\lceil n/2\rceil})|.$$
 \end{corollary}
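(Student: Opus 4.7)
The plan is to apply Theorem~\ref{Rio1} directly to $G_n$ and to simplify each of the five terms on the right-hand side using the definition of an io-decomposable Riordan graph. Recall that $G_n(g,f)$ being io-decomposable means $\langle V_o\rangle \cong G_{\lceil n/2\rceil}(g,f) = G_{\lceil n/2\rceil}$ and $\langle V_e\rangle$ is a null graph on $\lfloor n/2\rfloor$ vertices.

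First I would handle the induced subgraph on $V_o$: by the isomorphism above, $i(\langle V_o\rangle) = i(G_{\lceil n/2\rceil})$ and $|E(\langle V_o\rangle)| = |E(G_{\lceil n/2\rceil})|$, which accounts for two of the terms in the bound. Next I would deal with $\langle V_e\rangle$: since it is a null graph on $\lfloor n/2\rfloor$ vertices, it has no edges (so $|E(\langle V_e\rangle)| = 0$ and this term simply drops out), and every subset of its vertex set is an independent set, so $i(\langle V_e\rangle) = 2^{\lfloor n/2\rfloor}$, accounting for the $2^{\lfloor n/2\rfloor}$ term.

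Substituting these four evaluations into the inequality of Theorem~\ref{Rio1} yields
\[
i(G_n) \ge i(G_{\lceil n/2\rceil}) + 2^{\lfloor n/2\rfloor} + \lceil n/2\rceil\,\lfloor n/2\rfloor - |E(G_n)| + |E(G_{\lceil n/2\rceil})| + 0,
\]
which is the claim. There is no genuine obstacle: every simplification is immediate from the definition of io-decomposability, so the argument is essentially a one-line bookkeeping reduction to the previously established Theorem~\ref{Rio1}.
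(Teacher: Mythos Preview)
Your proposal is correct and follows exactly the paper's approach: both simply invoke Theorem~\ref{Rio1} and substitute using the defining properties of io-decomposability, namely $\langle V_o\rangle \cong G_{\lceil n/2\rceil}$ and $\langle V_e\rangle$ being a null graph on $\lfloor n/2\rfloor$ vertices.
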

\begin{proof} By the definition of an io-decomposable Riordan graph with $n$ vertices labeled by $1,\ldots,n$, $\langle V_o\rangle \cong G_{\lceil n/2\rceil}$ and $E(\langle V_e\rangle) = \emptyset$, so the result follows.\end{proof}

In what follows, we give a lower bound for the number of independent sets in io-decomposable Riordan graphs of the Bell type. We first present two properties of io-decomposable Riordan graphs of the Bell type.

\begin{lemma}[\cite{CJKM}]\label{multipartite-graph-io} An io-decomposable Riordan graph $G_{n}(g,zg)$ is $(\lceil\log_2n\rceil+1)$-partite with the partitions
$V_1,V_2,\ldots,V_{\lceil\log_2n\rceil+1}$ where
\begin{align*}
V_j=\left\{2^{j-1}+1+(i-1)2^j\;|\;1\le i\le\left\lfloor
{n-1+2^{j-1}\over 2^j}\right\rfloor\right\}\end{align*}
for $1\le j\le\lceil\log_2n\rceil$ and $V_{\lceil\log_2n\rceil+1}=\{1\}$.
\end{lemma}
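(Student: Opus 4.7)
The plan is to proceed by induction on $n$, using the Riordan Graph Decomposition Theorem (Theorem~\ref{e:th}) together with the recursive structure built into io-decomposability. The base case $n=1$ is trivial: the one-part partition $V_1=\{1\}$ works. For $n\ge 2$, assume the claim for all smaller io-decomposable Bell-type Riordan graphs and write $G_n=G_n(g,zg)$. By io-decomposability, $\langle V_e\rangle$ is a null graph, so $V_e$ is an independent set; this matches the formula at $j=1$, since $2^{0}+1+(i-1)\cdot 2=2i$ ranges over $\{2,4,\ldots,2\lfloor n/2\rfloor\}$ as $i$ runs from $1$ to $\lfloor n/2\rfloor$.

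Next, Theorem~\ref{e:th}(i) furnishes an isomorphism $\langle V_o\rangle\cong G_{\lceil n/2\rceil}(g,zg)$ under which vertex $k$ of the smaller graph corresponds to vertex $2k-1$ of $G_n$. Io-decomposability is inherited by the odd induced subgraph in the Bell-type setting (the parity condition on the coefficients of $g$ that makes $G_n(g,zg)$ io-decomposable for all $n$ is preserved by the odd decomposition), so the induction hypothesis applies to $G_{\lceil n/2\rceil}(g,zg)$, yielding independent sets $V'_1,\ldots,V'_{\lceil\log_2\lceil n/2\rceil\rceil+1}$ conforming to the formula. Pulling them back via $k\mapsto 2k-1$ produces independent sets in $\langle V_o\rangle$, hence in $G_n$, which together with $V_e$ partition $V(G_n)$. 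For $k=2^{j-1}+1+(i-1)2^j\in V'_j$, a direct computation gives $2k-1=2^j+1+(i-1)2^{j+1}$, matching the element template for $V_{j+1}$ in $G_n$, and $\lceil\log_2\lceil n/2\rceil\rceil+1=\lceil\log_2 n\rceil$, so the singleton $\{1\}$ (the top part of the smaller graph's partition) sits in slot $\lceil\log_2 n\rceil+1$ for $G_n$.

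It then remains to match the index ranges for the pulled-back parts. When $n=2m+1$, both $\lfloor(\lceil n/2\rceil-1+2^{j-1})/2^j\rfloor$ (from the inductive hypothesis) and $\lfloor(n-1+2^j)/2^{j+1}\rfloor$ (from the target formula) simplify to $\lfloor(m+2^{j-1})/2^j\rfloor$. When $n=2m$, the two relevant numerators are $2m-2+2^j$ and $2m-1+2^j$, differing by $1$; since $2m-1+2^j$ is odd, it cannot be divisible by $2^{j+1}\ge 2$, so the two floors agree. This closes the induction.

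The main obstacle is the arithmetic bookkeeping that identifies the pulled-back index range with the target formula, especially when $n$ is even, where the parity argument about divisibility by $2^{j+1}$ is needed. A secondary, more conceptual hurdle is confirming that io-decomposability passes to the odd induced subgraph: this is not literally immediate from the definition given in the excerpt, and should either be cited as a structural fact from \cite{CJKM} or derived from an explicit coefficient characterization (of the form $[z^{2k+1}]g\equiv[z^k]g\pmod 2$) that singles out the Bell-type io-decomposable generating functions.
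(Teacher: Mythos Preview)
This lemma carries no proof in the present paper---it is quoted from \cite{CJKM} as a known structural result---so there is no in-paper argument to compare your attempt against. That said, your inductive proof is correct: the identification $V_1=V_e$ via the io-decomposable hypothesis, the transport of $V'_j$ to $V_{j+1}$ under $k\mapsto 2k-1$ (since $2(2^{j-1}+1+(i-1)2^j)-1=2^j+1+(i-1)2^{j+1}$), the identity $\lceil\log_2\lceil n/2\rceil\rceil+1=\lceil\log_2 n\rceil$ for $n\ge 2$, and the floor-matching in both parities all check out. The parity trick in the even case is exactly right: with common denominator $2^{j+1}$ the two numerators differ by~$1$ and the larger one, $2m-1+2^j$, is odd, hence not a multiple of $2^{j+1}$.

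The only point that is not self-contained is the one you flag yourself: that $\langle V_o\rangle\cong G_{\lceil n/2\rceil}(g,zg)$ is again io-decomposable. This does not follow from the definition as stated in the excerpt (which is phrased for a single $G_n$, not for the pair $(g,zg)$), so it must either be cited from \cite{CJKM}---where io-decomposable Bell-type graphs are characterized by a coefficient congruence on $g$ that is visibly independent of $n$---or derived from such a characterization. With that citation in place, the induction closes.
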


\begin{lemma}[\cite{CJKM}]\label{Riordan-graph-Bell-type-decomposition}
Let $G_n(g,zg)$ be an io-decomposable Riordan graph. Then, the lines of the adjacency
matrix of $G_n(g,zg)$ can be simultaneously permuted to have the matrix
\begin{eqnarray*}
\left(
\begin{array}{cc}
X & B \\
B^{T} & O%
\end{array}
\right)
\end{eqnarray*}
where $X$ is the adjacency matrix of $\left< V_{o}\right>\cong
G_{\lceil n/2\rceil}(g(z),zg(z))$ and
\begin{eqnarray}\label{e:bm1}
B\equiv(zg,zg)_{\lceil n/2\rceil ,\lfloor
n/2\rfloor}+((zg)^{\prime}(\sqrt{z}),zg)) _{\lfloor n/2\rfloor
,\lceil n/2\rceil }^{T}\pmod 2.
\end{eqnarray}
\end{lemma}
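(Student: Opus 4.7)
The plan is to derive this lemma as a direct specialization of the Riordan Graph Decomposition Theorem (Theorem~\ref{e:th}) to the Bell-type setting $f(z)=zg(z)$, combined with the defining properties of io-decomposability.

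First I would invoke Theorem~\ref{e:th}(iii) with $f=zg$ to write $A(G_n)$ in the block form
\[
P^T\begin{pmatrix} X & B \\ B^T & Y\end{pmatrix}P,
\]
where $P$ is the same odd/even reordering permutation matrix. By the definition of an io-decomposable Riordan graph, $\langle V_e\rangle$ is null, so $Y=O$; and $\langle V_o\rangle\cong G_{\lceil n/2\rceil}(g,zg)$, so $X$ is its adjacency matrix. This settles the two diagonal blocks immediately.

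The bulk of the work is simplifying the off-diagonal block $B$. Substituting $f(z)=zg(z)$ into the formula in Theorem~\ref{e:th}(iii) yields
\[
B\equiv \bigl(z\cdot(gf)'(\sqrt{z}),zg\bigr)_{\lceil n/2\rceil\times\lfloor n/2\rfloor}+\bigl((zg)'(\sqrt{z}),zg\bigr)_{\lfloor n/2\rfloor\times\lceil n/2\rceil}^{T}\pmod 2,
\]
whose second summand already matches the target expression. For the first summand, I would compute $(gf)(z)=zg(z)^2$, hence $(gf)'(z)=g(z)^2+2zg(z)g'(z)\equiv g(z)^2\pmod 2$. The key step is then the Frobenius-type identity $g(\sqrt{z})^2\equiv g(z)\pmod 2$, which holds because the cross terms in $(\sum g_i z^{i/2})^2$ each carry a factor of $2$, leaving only $\sum g_i^2 z^i=\sum g_i z^i=g(z)$ modulo $2$. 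Combining these gives $z\cdot(gf)'(\sqrt{z})\equiv zg(z)\pmod 2$, so the first summand collapses to $(zg,zg)_{\lceil n/2\rceil\times\lfloor n/2\rfloor}$, as required.

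The main obstacle is cosmetic rather than structural: one must track the formal half-integer powers introduced by the substitution $z\mapsto\sqrt{z}$ with care, but because everything is computed modulo $2$ the Frobenius phenomenon automatically eliminates the cross terms and returns the expression to integer powers of $z$. All remaining steps are either direct invocations of Theorem~\ref{e:th} or immediate consequences of the io-decomposable definition, so no further ideas are needed beyond this $\bmod\ 2$ squaring trick.
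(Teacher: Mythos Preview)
Your derivation is correct. The paper does not prove this lemma at all---it is quoted verbatim from~\cite{CJKM}---so there is no in-paper argument to compare against; your approach of specializing Theorem~\ref{e:th} to $f=zg$ and then reducing $z\cdot(zg^2)'(\sqrt{z})$ to $zg(z)$ via the $\bmod\,2$ Frobenius identity $g(\sqrt z)^2\equiv g(z)$ is exactly the intended route and all steps check out.
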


 \begin{theorem}\label{yet-better-up-bound-io} Let $G_n$ be an io-decomposable Riordan graph of the Bell type with $n\ge2$ vertices.
 Then, $$i(G_n)\geq 2-\lceil\log_2n\rceil+\sum_{j=1}^{\lceil\log_2n\rceil}\left(2^{\alpha_{j}}+{\alpha_{j+1}^2-\alpha_{j+1}\over2}\right)$$
 where $\alpha_j=\left\lfloor{n-1+2^{j-1}\over2^j}\right\rfloor$.
 \end{theorem}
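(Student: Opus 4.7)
My plan is to induct on $n$ using the decomposition in Theorem~\ref{Rio1}. Let $R(n)$ denote the right-hand side of the claimed inequality. First I would verify the index-shift identity $\alpha_j(\lceil n/2\rceil)=\alpha_{j+1}(n)$ for every $j\ge 1$, which follows from $\lceil n/2\rceil=\lfloor(n+1)/2\rfloor$ by a short floor computation. Combined with $\lceil\log_2\lceil n/2\rceil\rceil=\lceil\log_2 n\rceil-1$ for $n\ge 2$, this identity gives the telescoping recurrence
\[
R(n) \;=\; R(\lceil n/2\rceil) \;+\; 2^{\alpha_1(n)} \;+\; \binom{\alpha_2(n)}{2} \;-\; 1,
\]
so it suffices to prove the corresponding lower-bound recursion for $i(G_n)$.

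The base cases $n\in\{1,2\}$ are immediate: $i(G_1)=2=R(1)$ and $G_2$ is an edge with $i(G_2)=3=R(2)$. For the inductive step I would use that $\langle V_o\rangle\cong G_{\lceil n/2\rceil}(g,zg)$ is again io-decomposable of the Bell type, so the inductive hypothesis gives $i(\langle V_o\rangle)\ge R(\lceil n/2\rceil)$; and that $\langle V_e\rangle$ is null, so $i(\langle V_e\rangle)=2^{\alpha_1(n)}$. Theorem~\ref{Rio1}, once one subtracts $1$ to compensate for the empty set being counted in both $i(\langle V_o\rangle)$ and $i(\langle V_e\rangle)$, yields
\[
i(G_n) \;\ge\; i(\langle V_o\rangle) \;+\; 2^{\alpha_1(n)} \;+\; \sigma_0(B) \;-\; 1,
\]
so the inductive step reduces to the bipartite non-edge estimate
\[
\sigma_0(B) \;\ge\; \binom{\alpha_2(n)}{2}.
\]

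This final inequality is the main obstacle. Since $\alpha_2(n)=|V_2|$, the goal is to inject the $\binom{|V_2|}{2}$ unordered pairs of $V_2$ into the set of zero entries of the $V_o\times V_e$ matrix $B$. A natural first attempt, mapping $\{u,w\}\subseteq V_2$ (with $u<w$) to the cross pair $\{u,w-1\}$, succeeds for the Pascal graph (by Lucas' theorem, since $w-1$ is even and the bit-$1$ parity obstruction forces $\binom{w-3}{u-1}$ to be even), but it already fails for the Catalan graph $CG_8$, where $\{3,6\}$ happens to be an edge. Consequently the association must be extracted uniformly from the explicit expression for $B$ given by Lemma~\ref{Riordan-graph-Bell-type-decomposition}, combined with the parity constraints on $g$ imposed by io-decomposability (in particular, those that force $\langle V_e\rangle$ to be null, which kill many coefficients of the relevant powers of $zg$ modulo $2$). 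Producing a sufficient family of zero entries from these congruences, valid for every io-decomposable Bell-type graph simultaneously, is where the bulk of the work would go.
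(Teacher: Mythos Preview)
Your inductive framework is essentially the paper's argument in recursive form: the paper unrolls the decomposition all at once over the partite classes $V_1,\dots,V_{\lceil\log_2 n\rceil+1}$, whereas you peel off one level per step. Your recurrence $R(n)=R(\lceil n/2\rceil)+2^{\alpha_1}+\binom{\alpha_2}{2}-1$ and the index shift $\alpha_j(\lceil n/2\rceil)=\alpha_{j+1}(n)$ are both correct, and both arguments reduce to exactly the same crux, namely $\sigma_0(B)\ge\binom{|V_2|}{2}$.

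The gap is that you treat this crux as hard, when it is a one-line consequence of Lemma~\ref{Riordan-graph-Bell-type-decomposition}. In the expression
\[
B\equiv (zg,zg)_{\lceil n/2\rceil,\lfloor n/2\rfloor}+\bigl((zg)'(\sqrt{z}),zg\bigr)_{\lfloor n/2\rfloor,\lceil n/2\rceil}^{T}\pmod 2,
\]
the first summand is strictly lower triangular and the second (being the transpose of a lower-triangular Riordan matrix) contributes only on or above the diagonal. Hence for $i>j$ one has $B_{ij}=[(zg,zg)]_{ij}$, which is precisely the $(i,j)$ entry of $A(\langle V_o\rangle)$ below the diagonal. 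Now $V_2$, under the relabeling of $\langle V_o\rangle$, is exactly the even-indexed vertex set of $\langle V_o\rangle$; since $\langle V_o\rangle$ is again io-decomposable, that set is independent, furnishing $\binom{|V_2|}{2}=\binom{\alpha_2}{2}$ zero entries below the diagonal of $A(\langle V_o\rangle)$, hence of $B$. This replaces your attempted injection $\{u,w\}\mapsto\{u,w-1\}$ (which, as you noticed, fails for $CG_8$) and eliminates the ``bulk of the work'' you anticipated: no coefficient congruences beyond io-decomposability itself are needed.
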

\begin{proof} For each $1\le j\le\lceil\log_2n\rceil +1$, the subgraph $G_n[V_j]$ induced by the vertex subset $V_j$ defined in Lemma \ref{multipartite-graph-io} is a null graph. In addition, $|V_j| = \alpha_j$ for each $1\le j\le\lceil\log_2n\rceil $ and $|V_{\lceil\log_2n\rceil}+1|=1$. Thus
\begin{align}\label{equation-1}
i(G_n[V_{\lceil\log_2n\rceil+1}])=2\; {\rm and}\;
i(G_n[V_j])=2^{\alpha_j}\;
\end{align}
for each $j=1,\ldots,\lceil\log_2n\rceil$.

 Now, we count the $2$-element independent sets each of which is formed by a vertex in $V_i$ and a vertex in $V_j$ for some $1 \le i < j \le \lceil\log_2n\rceil$.
By definition, $V_1 = \{ 2i \mid 1 \le i \le \left\lfloor \frac{n}{2} \right\rfloor\} = V_e$.
Therefore, $\bigcup_{i=2}^{\lceil \log_2n\rceil+1}=V_o$ and so the number of independent sets $\{u,v\}$ for $u \in V_1$ and $v \in V_j$, $j > 1$ is the number of zeros in the matrix $B$ given in Lemma~\ref{Riordan-graph-Bell-type-decomposition}.
 By (\ref{e:bm1}), the number of zeros below the main diagonal in $B$ is equals to the number of
zeros below the main diagonal in $A(\left<V_o\right>)$. Recall that
$\left<V_2\right>$ is the null graph of order $\alpha_2$. Since
$V_2$ is a subgraph of $V_o$, there are at least
$(\alpha_2^2-\alpha_2)/2$ zeros below the main diagonal in
$A(\left<V_o\right>)$. Therefore, there are at least
$(\alpha_2^2-\alpha_2)/2$ independent sets $\{u,v\}$ for $u\in V_1$
and $v\in V(G_n)\backslash V_1$. Similarly, we can show that there
are at least $(\alpha_{j+1}^2-\alpha_{j+1})/2$ independent sets
$\{u,v\}$ for $u\in V_j$ and $v\in \cup_{ j < i \le n} V_i$ for each
$2 \le j \le \lceil\log_2n\rceil$. Hence, we have
$$i(G_n)\geq 2-\lceil\log_2n\rceil+\sum_{j=1}^{\lceil\log_2n\rceil}\left(2^{\alpha_{j}}+{\alpha_{j+1}^2-\alpha_{j+1}\over2}\right)$$
where $\lceil\log_2n\rceil$ is the number of the empty sets which overlapped in (\ref{equation-1}). \end{proof}

\section{Directions of further research}\label{further-research-sec}
This paper focuses on giving lower and upper bounds for the number of independent sets for various classes of Riordan graphs. Of course, the most challenging thing here is in finding exact enumeration in question, which does not seem to be feasible in the context due to the problem generality. In any case, there are other questions one can ask. For example, the Catalan graphs are io-decomposable of the Bell type, so the results of Theorem~\ref{better-up-bound-io} can be applied to them. However, can we provide a more accurate upper bound, and some lower bound for this class of graphs?

In addition, Table~\ref{tab-data} gives initial values for the number of independent sets for the Fibonacci graphs and Motzkin graphs. However, obtaining any lower/upper bounds, or exact enumeration, for the number of independent sets for these graphs in general remains  an open problem.

 \end{document}